\newtheorem{thm}{Theorem}[section]
\newtheorem{lem}[thm]{Lemma}
\date{}
\let\oldproofname=\proofname
\renewcommand{\proofname}{\rm\bf{\oldproofname}}
 \newtheorem{prop}[thm]{Proposition}
\theoremstyle{definition}
 \newtheorem{rmk}[thm]{Remark}
 \newtheorem{claim}[thm]{Claim}
\newcommand{\N}{\mathcal{N}}
\begin{document}
\title{Homotopy type of the neighborhood complexes of graphs of maximal degree at most $3$ and $4$-regular circulant graphs }
\author{ Samir Shukla\footnote{{Department of Mathematics, Indian Institute of Technology Bombay, India. samirs@math.iitb.ac.in }}}
\maketitle
\begin{abstract}
	To estimate the lower bound for the chromatic number of a graph $G$,  Lov{\'a}sz  associated a simplicial complex 
	$\mathcal{N}(G)$ called the neighborhood complex and relates the topological connectivity of $\mathcal{N}(G)$ to the chromatic number
	of $G$. More generally he proved that
	the  chromatic number of $G$ is bounded below by the topological connectivity of $\mathcal{N}(G)$
	plus $3$.
	
	In this article, we consider the graphs of maximal degree at most $3$ and $4$-regular circulant graphs. We show that each connected component of the neighborhood comple-xes of these graphs  is homotopy equivalent either to a point, to a wedge sum of  circles, to a wedge sum of $2$-spheres $S^2$, to $S^3$, to a garland of $2$-spheres $S^2$ or to a connected sum of tori. 
\end{abstract}

\noindent {\bf Keywords} : Neighborhood complexes, Circulant  graphs, Shellability.


\vspace{.1in}

\hrule

\section{Introduction}

The {\it neighborhood complex}, $\N(G)$ of a graph $G$ is the simplicial complex whose
vertices are all non isolated vertices of $G$ and simplices are those subsets of $V(G)$, which have a common neighbor. 
The concept of neighborhood complex 
was introduced by Lov{\' a}sz \cite{l} in his proof of the Kneser conjecture, that if we split the $n$-subsets of a $(2n+k)$-elements set into 
$k+1$ classes, then one of the classes will contain two  disjoint subsets. To prove this conjecture, Lov{\' a}sz first converted this set theoretic problem into an equivalent problem of the computation of chromatic number of a class of graphs, called Kneser graphs and then relates the connectivity of neighborhood complex to the chromatic number of graph.

A topological space $X$ is said to be $k$-{\it connected} if every map from a $m$-dimensional sphere $S^m \to X$ can be extended to a map from the $(m+1)$-dimensional ball $\mathbb{B}^{m+1} \to X$ form $m = 0, 1, \ldots , k.$
\begin{thm} (Lov{\' a}sz) If $\mathcal{N}(G)$ is $k$-{\it connected}, then  $\chi(G) \geq k+3$.
\end{thm}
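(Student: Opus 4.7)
The natural route is through the Borsuk--Ulam theorem. I would argue the contrapositive: assuming $\chi(G) = n$, show that $\N(G)$ cannot be $(n-2)$-connected. Since $\N(G)$ carries no obvious free $\Z_2$-action, the first step is to replace it by an auxiliary complex that does. The standard choice is the \emph{box complex} $B(G)$: its vertex set is the disjoint union of two labelled copies $V(G)\times\{0,1\}$ of $V(G)$, and a set of vertices $(A\times\{0\})\cup(B\times\{1\})$ is declared a simplex whenever every $a\in A$ is $G$-adjacent to every $b\in B$. The swap $(v,0)\leftrightarrow(v,1)$ then gives a free simplicial $\Z_2$-action on $B(G)$.

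The argument would proceed in three steps. First, establish a homotopy-theoretic bridge: $B(G)$ is $k$-connected whenever $\N(G)$ is (in fact the two are homotopy equivalent). This can be done via the Nerve Lemma applied to the cover of $B(G)$ by closed stars of its vertices $(v,i)$: each such star is a simplex and hence contractible, while the pattern of nonempty intersections reproduces the face poset of $\N(G)$. Second, exploit functoriality: a proper $n$-coloring of $G$ is the same as a graph homomorphism $G\to K_n$ and therefore induces a $\Z_2$-equivariant simplicial map $B(G)\to B(K_n)$. A direct combinatorial identification shows that $B(K_n)$ is $\Z_2$-homeomorphic to the boundary of the $n$-dimensional cross-polytope, i.e.\ to $S^{n-2}$ with the antipodal action. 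Third, invoke the following standard consequence of Borsuk--Ulam: a free $\Z_2$-space $X$ that is $k$-connected admits no $\Z_2$-equivariant map to $S^m$ for $m\leq k$. Applied to the equivariant map $B(G)\to S^{n-2}$ produced in step two, this forces $n-2\geq k+1$, that is, $\chi(G)\geq k+3$.

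The main obstacle is the first step: rigorously linking the connectivities of $\N(G)$ and $B(G)$ is the subtle piece of the argument, since the remaining two steps are an elementary combinatorial computation and a standard appeal to equivariant obstruction theory, respectively. If the Nerve Lemma approach proves awkward, an alternative would be to build an explicit simplicial deformation retraction between a suitable subdivision of $B(G)$ and the natural model of $\N(G)$; but in either case this is the single non-routine homotopy-theoretic input the proof requires, everything else being either functoriality or the classical Borsuk--Ulam inequality.
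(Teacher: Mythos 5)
The paper states this result as a quoted theorem of Lov\'asz and gives no proof, so there is nothing internal to compare against; your plan must be judged on its own. Its overall architecture --- pass to a free $\mathbb{Z}_2$-complex, use functoriality of a coloring $G\to K_n$, and apply Borsuk--Ulam --- is the standard modern route (Matou\v{s}ek--Ziegler style) and is sound in outline; it genuinely differs from Lov\'asz's original argument, which stays inside $\N(G)$ and extracts a free involution from the order-reversing map $A\mapsto N(A)$ on a deformation retract of its face poset. However, as written your plan has two concrete defects, both located exactly at the step you identify as load-bearing.

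First, the box complex you define imposes no condition when one of $A,B$ is empty, so it contains the full simplex on each copy of $V(G)$; this is the complex usually written $B_0(G)$, and it is \emph{not} homotopy equivalent to $\N(G)$ --- it is $\mathbb{Z}_2$-homotopy equivalent to the suspension of the reduced box complex (the variant requiring $A$ and $B$ to have common neighbors), and it is the reduced complex that has the homotopy type of $\N(G)$. Consistently with your definition, $B_0(K_n)$ is indeed the boundary of the $n$-dimensional cross-polytope, but that sphere is $S^{n-1}$, not $S^{n-2}$. Your two off-by-one errors cancel in the final inequality, which is why you still land on $\chi(G)\geq k+3$, but the intermediate claims are mutually inconsistent and one of them must be repaired: either keep $B_0(G)$, prove it is $(k{+}1)$-connected as a suspension, and map it to $S^{n-1}$; or pass to the reduced box complex, prove it is $k$-connected, and identify $B(K_n)$ as a $\mathbb{Z}_2$-deformation retract of $S^{n-2}$. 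Second, the nerve-lemma justification of step one does not work as described: closed vertex stars are not simplices (they are cones, hence contractible, so that part is harmless), but the nerve of the cover of a complex by its vertex stars returns the complex itself (for open stars) or something coarser (closed stars of nonadjacent vertices can meet), not the face poset of $\N(G)$. The genuine content here is the chain $B(G)\simeq_{\mathbb{Z}_2}\Hom(K_2,G)\simeq\N(G)$, whose second equivalence is proved by a closure-operator/Quillen-fiber argument on face posets rather than by a vertex-star nerve. You correctly flag this as the one non-routine input, but the specific mechanism you propose for it would fail, so the plan is incomplete precisely at its critical step.
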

Lov{\'a}sz generalize the notion of neighborhood complex to a polyhedral complex Hom$(G$ $,  H)$, called the hom complex,
for graphs $G$ and $H$.  In particular Hom$(K_2, G)$ and $\N(G)$ are homotopy equivalent. The 
$0$-dimensional cells of Hom$(G, H)$ are the 
graph homomorphisms from $G$ to $H$. For more details about hom complexes we refer the reader to 
\cite{BK}. In \cite{BL}, Bj\"{o}rner and Longueville showed that the neighborhood complexes of a family of vertex critical subgraphs of
Kneser graphs -- the stable Kneser graphs, are spheres up to homotopy. In \cite{NS}, Nilakantan and author studied the neighborhood complexes of the exponential graphs $K_{n+1}^{K_n}$. In this  article we compute the homotopy type 
of the neighborhood complexes of $4$-regular circulant graphs.

Let $n \geq 2$ be a positive integer and $S \subset \{ 1, 2, \ldots, n-1\}$. 
The {\it circulant graph} $C_n(S)$ is the graph, whose set of vertices
$V(C_n(S)) =  \{1, 2, \ldots, n\}$  and any two vertices $x$ and $y$
adjacent if and only if $x-y \ (\text{mod \ n}) \in S \cup -S$, where $-S = \{n-a \ | \ a \in S\}$.
Circulant 
graphs are also Cayley graphs of $\mathbb{Z}_n$, the cyclic group on 
$n$ elements. Since $n \notin S$, $C_n(S)$ is a simple graph, {\it i.e.},
does not contains any loop. Further, $C_n(S)$ are $|S \cup -S|$-regular graphs, here $| \cdot |$ denoting the cardinality. It can be easily  verify that $C_n(S)$
is connected
if and only if $S \cup -S$ generates  $\mathbb{Z}_n$.
In this article we restrict ourselves to $|S| = 2$ and for the 
convenience of notation, we write $C_n(s,t)$ in place of $C_n(\{s,t\})$.
Since $C_{n}(s, t) = C_n(n-s, t) = C_n(s, n-t)$, we  assume that 
$s, t \leq \frac{n}{2}$.

\section{Statement of results}
We use the following definition of  garland of topological spaces in  Theorem  \ref{THEOREM3} and Theorem \ref{THEOREM4}.
Let $X_1, X_2, \ldots, X_n$ be topological spaces. A topological space $X =\bigcup\limits_{i=1}^{n} X_i$ is said to be a garland of $X_1, \ldots, X_n$, if $|X_i \cap X_{j} |= 1$ when  $|i-j| = 1 \ (mod \ n)$ and $X_i \cap X_j = \emptyset$ when $|i-j| \neq 1 \ (mod \ n)$   for all $1 \leq i \neq j \leq n$.

\begin{thm}\label{3regular}
	Let $G$ be a connected graph with maximal degree at most $3$ and $G \neq K_4, T$ (see Figure \ref{a} and \ref{b}).
	Each connected component on $\N(G)$ is either contractible or 
	homotopy equivalent to  wedge sum of circles.
\end{thm}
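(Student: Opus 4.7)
The plan is to analyze $\N(G)$ as a 2-dimensional simplicial complex and to show that, apart from the excluded graphs $K_4$ and $T$, every connected component of $\N(G)$ collapses (in the sense of simplicial collapses) onto a 1-dimensional subcomplex. Since any connected 1-dimensional simplicial complex is either a tree (hence contractible) or homotopy equivalent to a wedge of circles, the theorem would then follow immediately by considering each component separately.

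First I would record the structural observation that, because $G$ has maximum degree at most $3$, every maximal simplex of $\N(G)$ has the form $N(v)$ for some $v \in V(G)$ with $|N(v)| \leq 3$. In particular $\dim \N(G) \leq 2$, and the $2$-simplices of $\N(G)$ are exactly the triples $N(v)$ arising from degree-$3$ vertices $v$. If $G$ has no vertex of degree $3$ then $\N(G)$ is already $1$-dimensional and the statement is trivial, so I may assume $G$ has at least one degree-$3$ vertex.

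Next I would set up the collapsing scheme. For a $2$-simplex $\sigma = \{a,b,c\} = N(v)$, an edge $\{a,b\} \subset \sigma$ is a free face of $\sigma$ if and only if no other degree-$3$ vertex $v' \neq v$ satisfies $\{a,b\} \subset N(v')$; equivalently, $a$ and $b$ have no second common neighbor of degree $3$ in $G$. I would then iteratively remove pairs $(\sigma, e)$ where $e$ is a free edge of $\sigma$. A key point worth checking is that these collapses cascade nicely: collapsing $\sigma = \{a,b,c\}$ via the free edge $\{a,b\}$ leaves the other two edges $\{a,c\}$ and $\{b,c\}$ in $\N(G)$ with one fewer $2$-simplex above them, so edges that were barely non-free often become free after a neighboring collapse.

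The substantive step, and the main obstacle, is to classify when the process gets stuck, i.e., when there remains a non-empty subcomplex of $\N(G)$ in which every $2$-simplex has all three of its edges shared with another $2$-simplex. In graph-theoretic terms this forces, for every surviving degree-$3$ vertex $v$ with $N(v) = \{a,b,c\}$, the existence of three further degree-$3$ vertices $v_{ab}, v_{ac}, v_{bc}$ (distinct from $v$) which are common neighbors of the corresponding pairs. The max-degree-$3$ constraint makes this local configuration extremely rigid and I would perform a careful finite case analysis on how the sets $N(v), N(v_{ab}), N(v_{ac}), N(v_{bc})$ can overlap without violating the degree bound, showing that a connected graph realising it must be either $K_4$ or the specific graph $T$ (these being precisely the excluded cases, where the analogous collapsing fails and indeed produces $S^2$-type components, consistent with the abstract's wedge-of-spheres case).

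Once the classification is established, the proof wraps up formally: for $G \neq K_4, T$ the collapsing scheme exhausts every $2$-simplex in each component, the remaining complex is $1$-dimensional, and the stated homotopy classification follows. The formal difficulty is concentrated entirely in the rigidity argument identifying $K_4$ and $T$; the rest is bookkeeping of collapses and standard facts about graphs.
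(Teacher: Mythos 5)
Your overall strategy coincides with the paper's: collapse the $2$-simplices of $\N(G)$ via free edges until only a $1$-dimensional complex remains, and isolate $K_4$ and $T$ as the graphs where this fails. But there is a concrete error at the pivot of your argument. You assert that an edge $\{a,b\}$ of $\sigma=N(v)$ is a free face \emph{if and only if} $a$ and $b$ have no second common neighbour of degree $3$. The ``only if'' direction is false: a second common neighbour $v'\neq v$ with $N(v')=N(v)$ as sets spans the \emph{same} $2$-simplex, and $\{a,b\}$ is still free. This is not cosmetic, because it falsifies the rigidity claim you build on it. In $K_{3,3}$ every pair of vertices in one part has three common neighbours, all of degree $3$, so every edge of every $2$-simplex fails your criterion and your process is ``stuck'' from the outset; moreover $K_{3,3}$ realises exactly the local configuration you propose to classify (the vertices $v_{ab},v_{ac},v_{bc}$ exist for every $v$). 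Yet $K_{3,3}\neq K_4,T$ and $\N(K_{3,3})$ is simply two disjoint filled triangles, so the advertised case analysis cannot terminate in only $K_4$ and $T$. The paper avoids this precisely by first folding $G$ (deleting $u$ whenever $N(u)\subseteq N(v)$, which preserves the homotopy type of the neighbourhood complex by Proposition \ref{fold}) and then using fold-freeness repeatedly in the case analysis, e.g.\ to force the auxiliary vertices to have degree $3$. You need either that reduction or a corrected criterion in which the second common neighbour is required to have a genuinely different neighbourhood.

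A second gap is that the entire substance of the theorem lies in the case analysis you defer, and that analysis is subtler than classifying ``locally stuck'' simplices: in the paper's hardest subcase a $2$-simplex $N(x)$ has all three of its edges covered by a second $2$-simplex and acquires a free face only \emph{after} a neighbouring simplex $N(t)$ or $N(y)$ has been collapsed first, so one must track the global iterative process rather than argue simplex by simplex. Your remark that collapses ``cascade'' gestures at this, but as written the proposal neither states the correct stuck configuration nor carries out the finite analysis on which the whole theorem rests.
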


Let $n \geq 5$ and $s, t \in \{1, 2, \ldots, \lfloor \frac{n}{2}\rfloor\}, s \neq t$. To compute the homotopy type of the neighborhood complexes of $4$-regular circulant graphs we divide the set
$I = \{\{s, t\} \ | \ s, t \in \{1, 2, \ldots, \lfloor \frac{n}{2}\rfloor\}, s \neq t\}$ in to the following disjoint
classes: $I_1 = \{\{s, t\}  \in I \ | \ n \in \{2s, 2t, 2(s+t)\}\}, I_2 = \{\{s, t\} \in I \setminus I_1 \ | \  3s= t \ \text{or} \ 3t = s \}, 
I_3 = \{\{s, t\} \in I \setminus (I_1 \cup I_2)\ | \  3s = 5t \ \text{or} \ 3t = 5s \}$ and $I_4 = I \setminus (I_1 \cup I_2 \cup I_3)$.

\begin{thm}\label{THEOREM1}
	Let $\{s, t\} \in I_1$. Each connected component of $\N(C_n(s,t))$ is homotopy equivalent  to 
	
	\begin{itemize}
		\item [(A)] a point or wedge sum of circles, if $2s = n$ or $2t = n$.
		\item[(B)]  a point or $S^1$, if $2s, 2t \neq n$ and $2(s+t) = n$.
	\end{itemize}
\end{thm}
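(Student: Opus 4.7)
The argument splits along cases (A) and (B).

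For case (A), suppose without loss of generality that $2s=n$. Then $s\equiv -s\pmod n$, so $S\cup(-S)=\{s,t,-t\}$ and $C_n(s,t)$ has maximum degree $3$. One then invokes Theorem \ref{3regular} component by component to conclude that each component of $\N(C_n(s,t))$ is a point or a wedge of circles; the small configurations in which a component could be isomorphic to $K_4$ or $T$ are treated directly.

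For case (B), set $m:=n/2=s+t$. The pivotal observation is that $N(i)=N(i+m)$ for every vertex $i$: using $2(s+t)\equiv 0\pmod n$, each of $(i+m)\pm s$ and $(i+m)\pm t$ reduces mod $n$ to an element of $\{i\pm s,\,i\pm t\}$. Hence the maximal simplices of $\N(C_n(s,t))$ lie among the $m$ tetrahedra $\sigma_i:=N(i)$ for $i\in\{0,1,\ldots,m-1\}$.

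Next, analyze intersections of the cover $\{\sigma_i\}$. A vertex $v\in\sigma_i\cap\sigma_j$ forces $j-i$ into $\{0,\pm 2s,\pm 2t,\pm(s-t),\pm(s+t)\}$ modulo $n$, and reducing modulo $m$ (using $s+t\equiv 0$, $2s\equiv s-t$, $2t\equiv t-s\pmod m$) the only nontrivial difference yielding a nonempty intersection is $\pm(s-t)$; in that case $\sigma_i\cap\sigma_j$ consists of an antipodal pair $\{v,v+m\}$ together with the edge between them, which is contractible. The geometric heart of the argument is that triple intersections vanish: three distinct indices $i,j,k\in\mathbb{Z}_m$ sharing a common vertex would yield three elements of $\{\pm s,\pm t\}$ pairwise distinct modulo $m$, but $s+t=m$ forces $s\equiv -t$ and $-s\equiv t\pmod m$, leaving only two residue classes and contradicting pigeonhole.

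With all pairwise intersections contractible and all triple intersections empty, the nerve lemma gives $\N(C_n(s,t))\simeq\mathrm{Nerve}(\{\sigma_i\})$, a $1$-dimensional complex. In the generic subcase where neither $s=3t$ nor $t=3s$ holds, the $\sigma_i$ are pairwise distinct and the nerve coincides with the circulant graph $C_m(s-t)$, which is $2$-regular; its components are cycles of length $\geq 3$, each homotopy equivalent to $S^1$. In the degenerate subcase $s=3t$ (symmetrically $t=3s$), an extra identification $\sigma_i=\sigma_{i+m/2}$ collapses the distinct tetrahedra into a pairwise disjoint family whose nerve is a disjoint union of isolated vertices, each contributing a point. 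The main obstacle is the bookkeeping for this degenerate subcase and confirming that the nerve-lemma hypotheses hold cleanly in every regime.
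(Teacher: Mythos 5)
Your argument for case (B) is correct but takes a genuinely different route from the paper. The paper also starts from the identity $N(k)=N(k+\tfrac{n}{2})$, but then folds $C_n(s,t)$ onto the induced subgraph on $\{1,\ldots,\tfrac{n}{2}\}$ via Proposition \ref{fold} and observes that in the resulting $1$-dimensional complex every vertex lies in exactly two edges, so each component is a single cycle; the degenerate subcase $|t-s|=\tfrac{n}{4}$ is treated separately as a disjoint union of $3$-simplices. You instead cover $\N(C_n(s,t))$ by the closed simplices $N(i)$, $i\in\mathbb{Z}_{n/2}$, check that all nonempty pairwise intersections are single edges $\{v,v+\tfrac{n}{2}\}$ (hence contractible) and that all triple intersections vanish (your pigeonhole step is right: modulo $\tfrac{n}{2}$ the set $\{\pm s,\pm t\}$ collapses to the two residues $s$ and $t$), and invoke the nerve lemma to identify the complex with the $2$-regular circulant graph on $\mathbb{Z}_{n/2}$ with connection $\pm(t-s)$, whose components are cycles of length at least $3$; your degenerate subcase $3s=t$ or $3t=s$ is exactly the paper's $|t-s|=\tfrac{n}{4}$ and again yields pairwise disjoint tetrahedra. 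Your route is more conceptual and makes the circle structure transparent, at the price of importing the nerve lemma, which the paper never uses; the paper's folding argument is more elementary. Both reach the same dichotomy.

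For case (A) you follow the same route as the paper (the graph is $3$-regular, apply Theorem \ref{3regular}), but the step you defer --- that components isomorphic to $K_4$ or $T$ ``are treated directly'' --- is a genuine gap, and it cannot be closed in favour of the stated conclusion. Components isomorphic to $K_4$ really do occur in case (A): for $\{s,t\}=\{\tfrac{n}{4},\tfrac{n}{2}\}$ (e.g.\ $n=8$ and $\{s,t\}=\{2,4\}\in I_1$) the graph $C_n(s,t)$ is a disjoint union of copies of $K_4$, and $\N(K_4)$ is the boundary of a $3$-simplex, i.e.\ $S^2$, which is neither a point nor a wedge of circles. (The exception $T$ never arises, since every component of a circulant graph is itself a connected circulant graph and $T$ is not one.) So the promised direct treatment would have to contradict part (A) as stated; note that the paper's own proof of (A) silently ignores the $K_4$ exclusion in Theorem \ref{3regular} and therefore carries the same defect.
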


\begin{thm} \label{THEOREM2} Let $\{s,t\} \in I_2$.  Each connected component of $\N(C_n(s,t))$ is homotopy equivalent  to 
	\begin{itemize}
		\item[(A)] $S^3$, if $3s = t $ and $n = 10s \ ( \text{or} \ 3t = s$ and $n = 10t )$.
		\item[(B)] $S^2 \vee S^2$, if $3s = t $ and $n = 12s  \ (\text{or} \ 3t = s \ \text{and} \ n = 12t)$.
		\item[(C)] wedge sum of circles, if $3s = t $ and $n \neq 8s, 10s, 12s \ ( \text{or} \ 3t = s \ \text{and} \ n \neq 8t, 10t, 12t)$.
	\end{itemize}
\end{thm}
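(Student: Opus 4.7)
By the symmetry $s\leftrightarrow t$ it suffices to treat $t=3s$. The set $\{s,3s\}$ generates the subgroup $\langle s\rangle\leq\mathbb{Z}_n$, so $C_n(s,3s)$ has $\gcd(n,s)$ connected components, each isomorphic to $C_m(1,3)$ with $m:=n/\gcd(n,s)$; hence every connected component of $\N(C_n(s,3s))$ is homeomorphic to one of $\N(C_m(1,3))$. A short check, using $t=3s\leq n/2$, shows that $n=10s$ (resp.\ $n=12s$) forces $s=\gcd(n,s)$ and hence corresponds precisely to $m=10$ (resp.\ $m=12$); and $\{s,t\}\in I_2$ forces $m\geq 7$ and $m\neq 8$ (the latter being in $I_1$). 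So it suffices to determine the homotopy types of the components of $\N(C_m(1,3))$ for $m\geq 7$, $m\neq 8$.

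\smallskip

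\noindent\textbf{Splitting and cases (A), (B).} Since $1$ and $3$ are both odd, $C_m(1,3)$ is bipartite whenever $m$ is even, and every neighbourhood lies entirely in the opposite part. Hence $\N(C_m(1,3))$ splits into two isomorphic subcomplexes for even $m$, and under the relabelling $2i\mapsto i$ each half becomes the complex $\mathcal{K}_{m/2}$ on $\mathbb{Z}_{m/2}$ whose maximal faces are the consecutive intervals $\{j,j+1,j+2,j+3\}\pmod{m/2}$. For $\mathcal{K}_5$ the five maximal faces are precisely the five $4$-subsets of a $5$-set, so $\mathcal{K}_5=\partial\Delta^{4}\cong S^{3}$, yielding~(A). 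For $\mathcal{K}_6$ a direct face count gives $f$-vector $(6,15,18,6)$ and $\chi=3$; a short chain-complex calculation yields $H_*(\mathcal{K}_6)=(\mathbb{Z},0,\mathbb{Z}^{2},0)$; and the $2$-skeleton, which is $K_6$ minus the two ``antipodal'' triangles $\{0,2,4\}$ and $\{1,3,5\}$, is simply connected because the boundary of each missing triangle still bounds a $2$-chain in what remains. Whitehead's theorem then identifies $\mathcal{K}_6\simeq S^{2}\vee S^{2}$, yielding~(B).

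\smallskip

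\noindent\textbf{Case (C) and the main difficulty.} For $m\geq 7$ with $m\notin\{8,10,12\}$ the aim is to show that each component of $\N(C_m(1,3))$ is homotopy equivalent to a wedge of circles, which reduces to producing an acyclic discrete Morse matching on its face poset leaving critical cells only in dimensions $0$ and $1$. A natural candidate pairs each simplex $\sigma$ with $|\sigma|\geq 3$ with either $\sigma\cup\{v(\sigma)\}$ or $\sigma\setminus\{\max\sigma\}$, where $v(\sigma)$ is the cyclically smallest element of the common neighbourhood of $\sigma$ not already in $\sigma$; the number of circles is then $1-\chi(\N(C_m(1,3)))$, read off the $f$-vector. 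The main obstacle is acyclicity: the excluded values $m=10,12$ are forced by the genuine presence of higher homology computed in (A)--(B), so any such matching must degenerate there, and ruling out directed cycles for the remaining values of $m$ requires a careful analysis of how the matching propagates around $\mathbb{Z}_m$ (equivalently, one could replace the discrete Morse argument by an inductive sequence of elementary collapses starting from a distinguished tetrahedron and peeling off consecutive neighbours, using $m\geq 13$ or the small odd cases $m=7,9,11$ to guarantee there is always a free face).
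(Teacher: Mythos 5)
Your reduction to $C_m(1,3)$ with $m=n/\gcd(n,s)$ and the identification of the halves of $\N(C_m(1,3))$ (for even $m$) with the complex $\mathcal{K}_{m/2}$ of consecutive $4$-intervals is correct and clean, and for (A) it recovers exactly the paper's observation that $\N(C_{10}(1,3))$ is two copies of $\partial\Delta^4$. But case (C) is not proved: you state that it ``reduces to producing an acyclic discrete Morse matching,'' propose a candidate matching, and then explicitly identify the acyclicity of that matching (equivalently, the existence of a free face at every stage of a collapsing sequence) as ``the main difficulty'' without resolving it. That verification is the entire content of case (C), which covers infinitely many values of $m$ (all $m\geq 7$ with $m\notin\{8,10,12\}$, including all odd $m$), so the proof is incomplete precisely where the work lies. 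For comparison, the paper closes this gap with two rounds of explicit collapsible pairs, carried out uniformly in $k$: first $(\{3s+k,\,n-3s+k\},\,N(k))$ is collapsible because $N(3s+k)\cap N(n-3s+k)=\{k\}$ under the standing congruence conditions, collapsing each tetrahedron $N(k)$ to two triangles which are then identified in pairs; second $(\{3s+k,\,n-s+k\},\,\tau_k^1)$ is collapsible because $N(3s+k)\cap N(n-s+k)=\{2s+k,k\}$ and $\{3s+k,n-s+k\}\nsubseteq\tau_{2s+k}^1$. This lands in a $1$-dimensional complex with no free vertices, whence each component is a wedge of circles. If you want to keep your framework, you would need to supply an argument of comparable explicitness (your parenthetical ``peel off consecutive neighbours'' idea is the right shape, but you must actually exhibit the free faces and check they persist).

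On (B) your route is genuinely different from the paper's: the paper collapses $\N(C_{12}(1,3))$ to a $2$-complex and exhibits a shelling order with two spanning facets, while you compute $H_*(\mathcal{K}_6)=(\mathbb{Z},0,\mathbb{Z}^2,0)$ and invoke Whitehead. Your method is shorter for a single small complex but has a small logical gap: to conclude $\mathcal{K}_6\simeq S^2\vee S^2$ you need $\pi_1(\mathcal{K}_6)=0$, and ``the boundary of each missing triangle bounds a $2$-chain'' only gives $H_1=0$. The fix is easy (homotope the cycle $(0,2,4)$ across the triangles $\{0,1,2\},\{2,3,4\},\{4,5,0\}$ to the hexagon and contract it using $\{1,2,3\},\{3,4,5\},\{0,1,3\},\{0,3,5\}$, and similarly for $(1,3,5)$), but as written the step from homology to homotopy type is not justified.
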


\begin{thm}\label{THEOREM3}
	Let $\{s, t\} \in I_3$. Each connected component of $\N(C_n(s, t))$
	is homotopy equivalent to   
	\begin{itemize}
		
		\item[(A)] a garland of  the $2$-dimensional spheres $S^2$, if 
		$ 5s = 3t$ and $n = 4t \ (\text{or} \  5t = 3s \ \text{and} \  n = 4s)$.
		\item[(B)]  $S^2 \vee S^2$, if  $5s = 3t$ and $n = 4s \ (\text{or} \  5t = 3s \ \text{and} \  n = 4t)$.
		
		\item[(C)] wedge sum of circles, if $5s = 3t$ and one of  $ 6s$ or $\frac{14s}{3}$ is equal to $ n \ (\text{or} \  5t = 3s$ and 
		one of 
		$6t$ or $\frac{14t}{3}$ is equal to $ n)$.
		
		\item[(D)] connected sum of tori, if $5s = 3t$ and $4s, 6s, \frac{14s}{3}, 4t \neq n \ (\text{or}\
		5t= 3s$ and $4t,6t,\frac{14t}{3}, $ $4s \neq  n)$.
	\end{itemize}
	
\end{thm}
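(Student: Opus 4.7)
Proof plan. By the symmetry $C_n(s,t) = C_n(t,s)$, I may assume $5s = 3t$, which forces $s = 3k$ and $t = 5k$ for some integer $k \ge 1$. Every maximal simplex of $\N := \N(C_n(s,t))$ is of the form $\sigma_w := \{w-s, w+s, w-t, w+t\}$ for some $w \in \Z_n$; since $\{s,t\} \notin I_1$ the four residues are distinct, so each $\sigma_w$ is a tetrahedron. Two vertices $u,v$ are joined by an edge in $\N$ iff $u - v$ lies in $D := \{\pm 2s, \pm 2t, \pm(s+t), \pm(s-t)\} \pmod n$. Since $\langle D\rangle = \langle s+t, s-t\rangle = \langle\gcd(8k,2k)\rangle = \langle 2k\rangle \le \Z_n$, the components of $\N$ correspond to the cosets of $\langle 2k\rangle$; by $\Z_n$-translation they are isomorphic, so it suffices to analyze a single component $C$.

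Next, I catalogue the tetrahedra $\sigma_w$ meeting $C$ and read off their pairwise intersections, which depend only on $w - w' \pmod n$. The relation $5s = 3t$ already forces $s-t \equiv -2k$; each of (A)--(D) then carries additional arithmetic that dictates which further collisions occur and thereby shapes the topology. In case (A), $n = 4t$ gives $2t \equiv -2t$, so consecutive tetrahedra pair up into simplicial bipyramids with $S^2$-boundaries; a global check using the cyclic structure shows these spheres link up into a cycle meeting at single vertices, which is exactly a garland. In case (B), $n = 4s$ similarly produces two such bipyramidal $S^2$-bundles sharing one vertex, giving $S^2 \vee S^2$. In case (C), with $n \in \{6s,\,14s/3\}$, the extra relation (e.g.\ $s+t \equiv -2t$ when $n = 6s$) lets me build an explicit discrete Morse matching on $C$ that pairs every $3$- and $2$-dimensional cell with a lower face and leaves only critical $0$- and $1$-cells, so $C$ is homotopy equivalent to a wedge of circles; the number of circles is read off from the count of critical $1$-cells via the cyclic symmetry.

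The hardest case is \textbf{(D)}, the generic situation. Here no extra collapsings occur in $D$, and my plan is to exhibit a $\Z_n$-equivariant acyclic matching on the face poset of $C$ that pairs each tetrahedron $\sigma_w$ with a chosen free $2$-face, reducing $C$ up to homotopy to a $2$-dimensional subcomplex $K$. The bulk of the work is then to verify that $K$ is a closed, connected combinatorial $2$-manifold: every edge of $K$ must lie in exactly two triangles, and the link of every vertex must be a single cycle. Both conditions are local incidence checks, and the exclusion of the exceptional values $n \notin \{4s, 4t, 6s, 14s/3\}$ is exactly what rules out the degeneracies that would break them. Orientability then follows from the $\Z_n$-equivariance of the matching, which propagates a consistent orientation from any one starting triangle and thus excludes the non-orientable surfaces. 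Finally, an explicit count of vertices, edges, and triangles in $C$ (all enumerated by the arithmetic of $s, t, n$) gives $\chi(K) = V - E + F$, and solving $\chi(K) = 2 - 2g$ identifies $K$ with a connected sum of $g$ tori, completing the classification.
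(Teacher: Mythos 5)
Your overall architecture -- exploit the $\mathbb{Z}_n$-translation to reduce to one component, collapse away the top-dimensional cells, identify the residual $2$-complex combinatorially, and invoke the classification of surfaces in the generic case -- is the same as the paper's (which collapses each $N(k)$ onto two of its triangles via the free edge $\{s+k,n-s+k\}$ and then analyses the resulting pure $2$-complex). But two of your mechanisms would fail as described. First, in (A) and (B) the tetrahedra do \emph{not} ``pair up into simplicial bipyramids'': under the hypotheses of $I_3$ one checks that $|N(w)\cap N(w')|\le 2$ for all $w\neq w'$ (e.g.\ with $n=4t$, $N(w)\cap N(w+2t)=\{w+t,w-t\}$), so no two maximal simplices share a $2$-face; and even if they did, the union of two \emph{solid} tetrahedra is contractible, not an $S^2$. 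The spheres only appear after each $N(k)$ is collapsed onto a union of two of its triangles; in case (A) the surviving triangles from \emph{four different} neighborhoods assemble into $Bd(\Gamma_i)$ for a $3$-simplex $\Gamma_i$, and consecutive such spheres share an edge (not a vertex), giving a complex homotopy equivalent to a garland; in case (B) the paper instead exhibits an explicit shelling of each component with two spanning triangles. Your picture for (A)/(B) therefore needs to be replaced, not just fleshed out.

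Second, in (D) the claim that ``orientability follows from the $\mathbb{Z}_n$-equivariance of the matching, which propagates a consistent orientation from any one starting triangle'' is not a valid argument: non-orientable closed surfaces admit large cyclic symmetry groups, and propagating an orientation along the dual graph only proves orientability if one verifies that it returns consistently around \emph{every} closed dual cycle -- a global condition that equivariance does not supply. The paper handles this by explicitly orienting all $2n$ triangles $\tau_k^1,\tau_k^2$, checking that each edge receives opposite induced orientations from its two cofaces, and concluding that the sum of the oriented triangles is a nonzero $2$-cycle, i.e.\ $H_2(X;\mathbb{Z})\neq 0$; it separately rules out $S^2$ by a rank count showing $H_1(X;\mathbb{Z}_2)\neq 0$. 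Your Euler-characteristic computation could in principle replace the latter step, but it is not carried out, and without a correct orientability argument the case of a connected sum of projective planes is not excluded. The remaining items (the component decomposition via $\langle\gcd(2k,n)\rangle$, and the collapse-to-a-graph argument in (C), which matches the paper's two-stage collapse followed by the observation that every vertex of the resulting $1$-complex has degree at least $2$) are sound in outline.
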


\begin{thm} \label{THEOREM4}
	Let $\{s, t\} \in I_4$. Each
	connected component of $\N(C_n(s,t))$ is homotopy equivalent  to 
	\begin{itemize}
		\item[(A)]  either $S^1$ or $S^3$, if one of the $3s-t, 3t-s, 3s+t \ \text{or}\ 3t+s$ is equal to $ n$.

		\item[(B)] a garland of  the  $2$-dimensional spheres $S^2$, if 
		$ 3s-t, 3t-s, 3s+t, 3t+s \neq n$ and, $4t=n$ or $4s = n$.
		\item[(C)] connected sum of tori, if $3s-t, 3t-s, 3s+t, 3t+s , 4s, 4t \neq n$.
		
	\end{itemize}
	
\end{thm}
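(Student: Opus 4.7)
The plan is to analyze the simplicial structure of $\N(C_n(s,t))$ using the $\Z/n$-action on the vertex set. The maximal simplices are the neighborhoods $N(v)=\{v-s,\,v+s,\,v-t,\,v+t\}$ for $v\in\Z/n$; for $\{s,t\}\in I_4$ the exclusion of the $I_1$ relations ensures these are $4$-element sets, so $\N(C_n(s,t))$ is a pure $3$-dimensional complex whose $n$ facets are tetrahedra. The connected components correspond to cosets of the subgroup $\langle s+t,\,s-t\rangle\leq\Z/n$, so I restrict attention to a single component.

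The key preliminary step is to enumerate, for each nonzero $a\in\Z/n$, the intersection $N(v)\cap N(v+a)$. This amounts to counting representations $a=\epsilon_2 x_2-\epsilon_1 x_1$ with $\epsilon_i\in\{\pm 1\}$ and $x_i\in\{s,t\}$. For $\{s,t\}\in I_4$ only the generic count occurs: $|N(v)\cap N(v+a)|=1$ for $a\in\{\pm 2s,\pm 2t\}$, $|N(v)\cap N(v+a)|=2$ for $a\in\{\pm(s+t),\pm(s-t)\}$, and the intersection is empty otherwise. Each additional hypothesis entering cases (A), (B), (C) of the theorem corresponds to a new representation of some $a$, hence to extra face-sharing between tetrahedra.

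In case (B), when $4s=n$, the two differences $+2s$ and $-2s$ are identified and each diameter edge $\{v-s,v+s\}$ lies in both $N(v)$ and $N(v+2s)$. The four consecutive tetrahedra $N(v),\,N(v+s),\,N(v+2s),\,N(v+3s)$ then glue around their common diameters into a subcomplex that deformation retracts onto a $2$-sphere; consecutive four-blocks intersect in a single vertex, producing the cyclic garland of $2$-spheres. In case (A), the relation $3s\pm t\equiv 0\pmod n$ (or its analogue with $s,t$ swapped) forces $v-t\equiv v+3s$ and creates a long chain of coincidences between $s$-translated tetrahedra; the resulting subcomplex collapses onto a circle, and an auxiliary count of surviving top-dimensional cells determines whether a nontrivial $3$-cycle is also produced (giving $S^3$) or not (giving $S^1$). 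In case (C), no extra representation of any $a$ exists, so every $2$-face of $\N(C_n(s,t))$ lies in exactly one tetrahedron, i.e.\ is a free face. A $\Z/n$-equivariant choice of free face in each tetrahedron allows a simultaneous elementary collapse, reducing the complex to a pure $2$-dimensional subcomplex $\Sigma$ whose face count gives $\chi(\Sigma)=0$ and which, once shown to be a closed orientable surface, is a torus per component, i.e.\ a connected sum of tori.

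The principal obstacle is case (C): establishing that the collapsed complex $\Sigma$ is a genuine closed orientable surface rather than merely a $2$-pseudomanifold or a non-orientable surface. This requires a careful link analysis at each vertex of $\Sigma$, showing the link is a cycle, together with a $\Z/n$-equivariant choice of collapsing free face that produces a coherent orientation on $\Sigma$. Once these two points are settled, the surface classification theorem combined with the computation $\chi(\Sigma)=0$ yields the stated homotopy type.
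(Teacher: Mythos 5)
Your overall strategy (classify the intersections $N(v)\cap N(v+a)$, collapse each tetrahedron, and in case (C) identify the residual $2$-complex as a closed orientable surface with $\chi=0$) is the same as the paper's, but the collapse you propose in case (C) does not produce the object you need, and this is a genuine gap rather than a detail. You observe correctly that under the case (C) hypotheses every $2$-face of $\N(C_n(s,t))$ lies in a unique tetrahedron, and you then perform an elementary collapse through one free \emph{triangle} per tetrahedron. Such a collapse removes only the triangle and the tetrahedron, so the resulting complex $\Sigma$ has $3n$ triangles and $4n$ edges. A closed surface must satisfy $3f_2=2f_1$, and $9n\neq 8n$; concretely, an edge such as $\{v+s,v+t\}$ lies in two triangles of $N(v)$ and two of $N(v+s+t)$, hence in three or four triangles of $\Sigma$, so $\Sigma$ is not even a pseudomanifold and the link analysis you defer to cannot succeed. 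The paper's Lemma \ref{collapse1} instead exhibits the \emph{edge} $\{k+s,k-s\}$ as a free face of $N(k)$; collapsing through it removes that edge, the two triangles containing it, and the tetrahedron, leaving exactly two triangles per tetrahedron ($2n$ triangles, $3n$ edges, so $3f_2=2f_1$ holds), and it is this complex $X$ that is shown to be a closed surface. Orientability is then obtained not by a coherent orientation argument but by computing $H_2(X;\mathbb{Z})\neq 0$ from the explicit fundamental $2$-chain $\sum_k(\tau_k^1+\tau_k^2)$, and ruling out $S^2$ via $H_1(X;\mathbb{Z}_2)\neq 0$ and universal coefficients.

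Case (A) is also not right as stated: if a complex ``collapses onto a circle'' it is homotopy equivalent to a circle, so no ``auxiliary count of surviving top-dimensional cells'' can then produce $S^3$. The actual dichotomy in the paper is arithmetic: when, say, $3t-s=n$ and additionally $2s=t$ (forcing $n=5s$), each component of $\N(C_n(s,t))$ is literally the boundary of a $4$-simplex, hence $S^3$, and no collapse to lower dimension is available; in all other subcases the complex collapses (again via Lemma \ref{collapse1} followed by a second round of edge collapses) to a $1$-complex in which every vertex has degree exactly two, giving $S^1$. Your case (B) picture is closest to the paper's Theorem \ref{gerland}, though the paper realizes the garland as boundaries $Bd(\Gamma_i)$ of $3$-simplices assembled from the $\tau_k^i$, each consecutive pair meeting in one vertex, rather than as blocks of four tetrahedra.
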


The following theorem can be considered as a special case of Theorem \ref{THEOREM4} (C).
\begin{thm} \label{special}
	Let $n = pq$, where gcd$(p, q) = 1$. Let $s, t \in \{1, \ldots, n-1\}$ such that
	$2s, 2t, 2(s+t), 3s-t, 3t-s, 3s+t, 3t+s, 4s, 4t \not\equiv 0 \ (\text{mod} \ n)$.
	If $s = \frac{p-q}{2}$ and  $t = \frac{p+q}{2}$ or, $s = \frac{p^2-q}{2} $ and 
	$t = \frac{p^2+q}{2}$,  then $\N(C_{n}(s, t))$ is homotopy equivalent  to  torus. 
	
\end{thm}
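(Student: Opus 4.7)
The strategy is to apply Theorem \ref{THEOREM4}(C), which identifies each connected component of $\N(C_n(s,t))$ (up to homotopy) with a connected sum of tori, and then to pin the genus down to $1$ by an Euler characteristic computation. I first verify that the hypotheses place us inside case (C) of Theorem \ref{THEOREM4}. The conditions $2s,\,2t,\,2(s+t)\not\equiv 0\pmod n$ give $\{s,t\}\notin I_1$; the conditions $3s-t,\,3t-s\not\equiv 0\pmod n$ rule out both the integer equalities $t=3s,\,s=3t$ (placing us outside $I_2$) and the Theorem \ref{THEOREM4}(A) cases $3s-t=n,\,3t-s=n$, while $3s+t,\,3t+s\not\equiv 0\pmod n$ cover the remaining subcases of (A), and $4s,\,4t\not\equiv 0\pmod n$ eliminates case (B). To rule out $I_3$, I substitute the two parametrizations into $3s=5t$ and $3t=5s$; each reduces to $p=4q$ or $p^2=4q$, which is incompatible with $\gcd(p,q)=1$ together with the fact that $p,q$ must both be odd (they have the same parity since $(p\pm q)/2$ and $(p^2\pm q)/2$ must be integers, and coprime integers of the same parity are both odd). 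Hence $\{s,t\}\in I_4$ and Theorem \ref{THEOREM4}(C) applies.

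To apply the conclusion to all of $\N(C_n(s,t))$ and not just one component, I show that $\N(C_n(s,t))$ is connected. In both parametrizations $s+t$ and $t-s$ lie in the $\mathbb{Z}$-span of $\{s,t\}$ and equal $\{p,q\}$ or $\{p^2,q\}$ respectively; with $\gcd(p,q)=1$ this forces $\gcd(s,t,n)=1$, so $C_n(s,t)$ is connected. Since $n=pq$ is odd, $C_n(s,t)$ is also non-bipartite, so $\N(C_n(s,t))$ is connected. Theorem \ref{THEOREM4}(C) therefore identifies $\N(C_n(s,t))$, up to homotopy, with a closed orientable surface $\Sigma_g$ of some genus $g\geq 1$.

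Finally, I compute $\chi(\N(C_n(s,t)))$ and use $\chi(\Sigma_g)=2-2g$ to conclude $g=1$. The excluded relations ensure that the neighborhoods $N(z)=\{z\pm s,z\pm t\}$ are pairwise distinct $4$-subsets, and that no two satisfy $|N(z)\cap N(z')|\geq 3$ for $z\neq z'$; so the maximal simplices are exactly the $n$ tetrahedra $N(z)$, yielding $f_3=n$ and $f_2=4n$. For edges, any pair $\{u,v\}$ with a common neighbor satisfies $v-u\in\{\pm 2s,\pm 2t,\pm(s+t),\pm(s-t)\}$, and the hypotheses (together with $2(s-t)\not\equiv 0\pmod n$, which follows from $n$ being odd) make these eight residues pairwise distinct. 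A case check shows pairs with $v-u\in\{\pm 2s,\pm 2t\}$ have exactly one common neighbor while pairs with $v-u\in\{\pm(s+t),\pm(s-t)\}$ have exactly two, giving $f_1=2n+2n=4n$, consistent with the total count $\sum_{\{u,v\}}|N(u)\cap N(v)|=\binom{4}{2}n=6n$. Therefore $\chi=n-4n+4n-n=0=2-2g$, so $g=1$ and $\N(C_n(s,t))$ is homotopy equivalent to a torus. The main obstacle is the bookkeeping in this last step: verifying that every possible coincidence among neighborhoods, or among pairs-with-common-neighbor, forces one of the excluded congruences, which is a finite but somewhat tedious case analysis.
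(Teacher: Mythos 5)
Your proof is correct and takes a genuinely different route from the paper's. The paper does not go through Theorem \ref{THEOREM4}(C) at all: it collapses $\N(C_n(s,t))$ to the pseudomanifold $X$ of Lemma \ref{collapse1} and then exhibits an explicit triangulation of the torus, arranging the $2n$ triangles $\tau_k^1,\tau_k^2$ on a $(q+1)\times(p+1)$ grid whose rows and columns are closed $2$-paths generated by the shifts $u=t-s$ and $v=t+s$; the coprimality of $p$ and $q$ is used to show that the only vertex identifications on the grid are the boundary ones, which delivers the homeomorphism type and the connectivity in one stroke. You instead accept the classification of the components as closed orientable surfaces of genus $\geq 1$, prove connectivity separately (via $\gcd(s,t,n)=1$ and non-bipartiteness of a connected $4$-regular graph on an odd number of vertices --- both arguments are sound), and pin the genus to $1$ by computing $\chi=n-4n+4n-n=0$. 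Your face counts check out: the excluded congruences do make the eight differences $\pm 2s,\pm 2t,\pm(s+t),\pm(s-t)$ distinct and force $|N(z)\cap N(z')|\le 2$ for $z\neq z'$. In fact the paper's proof of Theorem \ref{tori} already records $n$ vertices, $3n$ edges and $2n$ triangles for $X$, so $\chi(X)=0$ could have been read off there; your computation thus incidentally shows that Theorem \ref{tori} always produces genus exactly $1$. Your approach buys a shorter conceptual argument; the paper's buys an explicit triangulation and makes visible where the arithmetic of $p,q$ enters.

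One point to tighten. Theorem \ref{THEOREM4} and the sets $I_1,\dots,I_4$ are defined only for $s,t\le\lfloor n/2\rfloor$, whereas the parametrization in Theorem \ref{special} can produce $s,t>n/2$ (for instance $p=5$, $q=3$ in the second family gives $n=15$, $s=11$, $t=14$, and all nine congruence conditions hold). You must first replace $s$ or $t$ by $n-s$ or $n-t$; the nine congruence hypotheses are invariant under this replacement, so Theorem \ref{tori} --- whose hypotheses are exactly those congruences once $s,t\le\lfloor n/2\rfloor$ --- applies directly, but your exclusion of $I_3$ by substituting the parametrization into $3s=5t$ is carried out on the unnormalized values and would need to be redone for the normalized pair. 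This is a patchable bookkeeping issue rather than a flaw in the strategy: even if the normalized pair did land in $I_3$, Theorem \ref{THEOREM3}(D) reduces to the same Theorem \ref{tori}, and its extra hypotheses follow from the nine congruences, so the conclusion is unaffected.
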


\section{Preliminaries}

\subsection{Graph}
A  graph $G$ is a  pair $(V(G), E(G))$,  where $V(G)$  is the set of vertices
of $G$  and $E(G) \subset V(G)\times V(G)$
denotes the set of edges.
If $(x, y) \in E(G)$, it is also denoted by $x \sim y$. 
A {\it subgraph} $H$ of $G$ is a graph with $V(H) \subset V(G)$ and $E(H) \subset E(G)$.
For a subset $U \subset V(G)$, the induced subgraph $G[U]$ is the subgraph whose set of vertices  $V(G[U]) = U$
and the set of edges
$E(G[U]) = \{(a, b) \in E(G) \ | \ a, b \in U\}$.

A {\it graph homomorphism} from  $G$ to $H$ is a function
$\phi: V(G) \to V(H)$ such that, $(v,w) \in E(G) \implies (\phi(v),\phi(w)) \in E(H).$ A graph homomorphism $f$ is called an {\it isomorphism} if $f$ is bijective and $f^{-1}$ is also a graph homomorphism. Two graphs are called {\it isomorphic},  if there exists an isomorphism between them. If $G$ and $H$ are isomorphic, we write $G \cong H$.
The {\it chromatic number} $\chi(G)$ of a graph $G$ is defined as
$\chi(G) := \text{min}\{n \ | \ \exists \ \text{a graph} $ $ \text{ homomorphism from } G \ \text{to} \ K_n\} $.
Here, $K_n$ denotes a complete graph on $n$ vertices.

Let $G$ be a graph and $v$ be a vertex of $G$. The {\it
	neighbourhood  of $v$ }is defined as $N(v)=\{ w \in V(G) \ |  \
(v,w) \in E(G)\}$. If $A\subset V(G)$, the set of neighbours of $A$
is defined as $N(A)= \{x \in  V(G) \ | \ (x,a) \in E(G)
\,\,\forall\,\, a \in A \}$. The {\it degree} of a vertex $v$ is $|N(v)|$. A graph is said to be {\it $d$-regular}, 
if each vertex has degree $d$. The {\it maximal degree} of $G$ is the maximum of  the degree of vertices of $G$. 

\subsection{Simplicial complex}
A {\it finite abstract simplicial complex X} is a collection of
finite sets such that if $\tau \in X$ and $\sigma \subset \tau$,
then $\sigma \in X$. The elements  of $X$ are called {\it simplices}
of $X$. The  dimension of a simplex $\sigma$ is equal to $|\sigma| - 1$. 
The dimension of an abstract  simplicial complex is the maximum of the dimensions of its simplices. The $0$-dimensional
simplices are called vertices of $X$. If $\sigma \subset \tau$, we say that
$\sigma$ is a face of $\tau$.  If a simplex has dimension $k$, it is said to
be $k${\it -dimensional} or   $k$-{\it simplex}.  The {\it boundary} of a $k$-simplex
$\sigma $ is the simplicial complex, consisting of  all  faces of $\sigma$
of dimension $\leq k-1$ and it is denoted by $Bd(\sigma).$
A simplex which is not a face of any other
simplex is called a  {\it maximal simplex}. The set of maximal simplices of $X$ is denoted by $M(X)$. A simplicial complex is called
{\it pure $d$-dimensional}, if all of its maximal simplices are of dimension $d$.

Let  $X$ be a simplicial complex and $\tau, \sigma \in X$ such that
$\sigma \subsetneq \tau$ and  $\tau$ is the only maximal simplex in $X$ that contains $\sigma$.
A  {\it simplicial collapse} of $X$ is the simplicial complex $Y$ obtained from $X$ by
removing all those simplices $\gamma$  of $X$ such that
$\sigma \subseteq \gamma \subseteq \tau$. Here, $\sigma$ is called a {\it free face} of
$\tau$ and $(\sigma, \tau)$ is called a {\it collapsible pair}. We denote this collapse
by $X \searrow Y$. In particular, if 
$X \searrow Y$, then $X \simeq Y$.

\subsection{Shellability}

A pure  $d$-dimensional simplicial complex $X$ is said to be {\it shellable},
if its maximal simplices can be ordered $\Gamma_1, \Gamma_2 \ldots, \Gamma_t$ in such a way
that the subcomplex $(\bigcup\limits_{i = 1}^{k-1} \Gamma_i) \cap \Gamma_k$ is pure
and $(d-1)$-dimensional for all $k = 2, \ldots, t$. Here, this ordering $\Gamma_1, \ldots, \Gamma_t$
of maximal simplices is called a {\it shelling order}.
A maximal simplex $\Gamma_k$ is called spanning with respect to the given shelling
order if $Bd(\Gamma_k) \subseteq \bigcup\limits_{i = 1}^{k-1} \Gamma_i$.

From \cite[Theorem $12.3$]{BK} we have the following result which tells us about the homotopy type of shellable complexes.

\begin{prop} \label{shellable}
	Assume that $X$ be a shellable  simplicial complex,
	with $\Gamma_1, \Gamma_2, \ldots, \Gamma_t$ being the corresponding shelling order of the maximal simplices,
	and $\sum$ being the set of spanning simplices. Then 
	\begin{center}
		$X \simeq \bigvee\limits_{\sigma \in \sum} S^{dim \ \sigma}$
	\end{center}
\end{prop}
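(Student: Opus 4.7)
My plan is to combine Theorem \ref{THEOREM4}(C), which already places us in the ``connected sum of tori'' regime on each component, with an Euler characteristic computation pinning down the genus, plus a direct verification that the complex is connected under the specific arithmetic hypotheses. First I would verify that $\{s,t\} \in I_4$ for the given choices: the hypothesis already excludes $2s, 2t, 2(s+t), 3s \pm t, 3t \pm s, 4s, 4t \not\equiv 0 \pmod n$, and using $\gcd(p,q) = 1$ one checks that also $3s \neq t$, $3t \neq s$, $3s \neq 5t$, $3t \neq 5s$ modulo $n = pq$ (each such relation reduces to a small linear identity in $p, q$ that fails given coprimality). Then Theorem \ref{THEOREM4}(C) applies, and every connected component of $\N(C_n(s,t))$ is a connected sum of tori.

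For connectivity, I note that $\{a,b\}$ is an edge of $\N(C_n(s,t))$ exactly when $b - a \in D := \{\pm 2s, \pm 2t, \pm(s+t), \pm(t-s)\} \pmod n$; the $1$-skeleton is therefore a Cayley graph on $\Z_n$ with connection set $D$, and is connected iff $\langle D \rangle = \Z_n$. In the first case $s = (p-q)/2$, $t = (p+q)/2$, one computes $s+t = p$ and $t-s = q$; since $\gcd(p,q) = 1$, B\'{e}zout gives $\langle p, q \rangle = \Z_{pq}$. In the second case $s+t = p^2$ and $t-s = q$, and $\gcd(p^2, q) = 1$ yields the same conclusion. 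Hence $\N(C_n(s,t))$ is connected.

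For the genus, I would compute $\chi(\N(C_n(s,t)))$ by direct simplex counting under the stated hypotheses. The key local facts are: pairs with difference $\pm 2s$ or $\pm 2t$ have exactly one common neighbor, pairs with difference $\pm(s+t)$ or $\pm(t-s)$ have exactly two, each triple contained in some $N(v)$ has a unique common neighbor, and the map $v \mapsto N(v) = \{v \pm s, v \pm t\}$ is injective. These give $f_0 = n$, $f_1 = 4n$ (there are $n$ edges per difference class in $D$), $f_2 = 4n$ (four distinct triples per $v$), and $f_3 = n$, so
\[
\chi(\N(C_n(s,t))) = n - 4n + 4n - n = 0.
\]
Combined with connectivity (ensuring a single component) and Theorem \ref{THEOREM4}(C) (this component is a connected sum of $g \geq 1$ tori with Euler characteristic $2 - 2g$), we conclude $g = 1$, i.e., $\N(C_n(s,t)) \simeq T^2$.

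The main obstacle is the exhaustive case analysis behind the simplex counts: each potential coincidence (an extra common neighbor of a pair or triple, or an equality $N(v) = N(w)$ for $v \neq w$) reduces to one of the linear relations among $s, t, n$ excluded by the explicit hypotheses, and these must all be verified carefully.
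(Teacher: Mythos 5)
Your proposal does not address the statement at hand. Proposition \ref{shellable} is a general fact about shellable simplicial complexes: if $X$ is shellable with shelling order $\Gamma_1,\ldots,\Gamma_t$ and $\Sigma$ is the set of spanning facets (those $\Gamma_k$ with $Bd(\Gamma_k)\subseteq \bigcup_{i<k}\Gamma_i$), then $X$ is homotopy equivalent to a wedge of spheres, one sphere of dimension $\dim\sigma$ for each $\sigma\in\Sigma$. This is a purely combinatorial-topological statement with no reference to graphs, circulant graphs, or neighborhood complexes; in the paper it is simply quoted from Babson--Kozlov (Theorem 12.3 of \cite{BK}, going back to Bj\"orner), and a proof would proceed by induction on the shelling order, observing that attaching a non-spanning facet $\Gamma_k$ along a proper, pure, codimension-one part of its boundary is a homotopy equivalence (the intersection is collapsible, being a proper union of facets of $Bd(\Gamma_k)$), while attaching a spanning facet along its entire boundary wedges on a sphere of dimension $\dim\Gamma_k$.

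What you have written instead is a proof sketch for Theorem \ref{special} (the torus result for $\N(C_{pq}(s,t))$ with $s=\frac{p-q}{2}$, $t=\frac{p+q}{2}$, etc.): you invoke Theorem \ref{THEOREM4}(C), check connectivity of a Cayley graph, and compute an Euler characteristic to pin down the genus. Whatever the merits of that argument for that theorem, it cannot serve as a proof of Proposition \ref{shellable}, which is strictly more general and is in fact used as an ingredient elsewhere in the paper (e.g.\ in the proofs of Theorem \ref{THEOREM2}(B) and Theorem \ref{THEOREM3}(B)). You need to restart from the definition of shellability and argue directly about the homotopy type of the successive unions $\bigcup_{i\le k}\Gamma_i$.
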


\begin{rmk} \label{remarkshe}
	Any connected $1$-dimensional simplicial complex is always shellable and therefore it is either contractible or homotopy equivalent to wedge sum of circles.
\end{rmk}

\subsection{Folding}
Let $G$ be a graph and $N(u) \subset N(v)$  for $u,v \in V(G), u \neq v$. In this case,  the graph homomorphism  $V(G) \to V(G) \setminus \{u\}$, which sends $u$ to $v$ and fixes all other vertices, called {\it folding} and 
the graph $G\setminus \{u\}$ is called a {\it fold} of $G$.  Here,  $V(G\setminus
\{u\}) = V(G) \setminus \{u\}$ and the edges in the subgraph
$G\setminus \{u\}$ are all those edges of $G$ which do not contain
$u$. 

\begin{prop}(\cite{BK}, Proposition $4.2$ and Proposition $5.1$) \label{fold} \\
	Let $G$ be a graph and $u \in V(G)$. If $G$ is folded on to $G \setminus \{u\}$, then $\N(G)$ is of same homotopy type as $\N(G \setminus \{u\})$.
\end{prop}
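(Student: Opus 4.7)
The plan is to construct mutually inverse, up to homotopy, simplicial maps between $\N(G)$ and $\N(G\setminus\{u\})$ using the standard contiguity principle: two simplicial maps $f, g \colon X \to Y$ are homotopic whenever $f(\sigma) \cup g(\sigma)$ is a simplex of $Y$ for every simplex $\sigma$ of $X$. By the folding hypothesis there is a vertex $v \neq u$ with $N(u) \subseteq N(v)$, and this single containment will drive every verification below.

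First I would set $r \colon \N(G) \to \N(G\setminus\{u\})$ to be the simplicial map induced by the vertex map $V(G) \to V(G) \setminus \{u\}$ sending $u \mapsto v$ and fixing all other vertices. To check this is well defined, I would take any simplex $\sigma \in \N(G)$ together with a common neighbor $w$ in $G$ and split into cases. If $u \in \sigma$, then $w \in N(u) \subseteq N(v)$, and $w \neq u$ automatically because the graph is simple; thus $r(\sigma) = (\sigma \setminus \{u\}) \cup \{v\} \subseteq N(w)$ with $w \in V(G) \setminus \{u\}$. If $u \notin \sigma$ and $w = u$, then $\sigma \subseteq N(u) \subseteq N(v)$, so $v$ witnesses $\sigma = r(\sigma)$ in $G \setminus \{u\}$; if $u \notin \sigma$ and $w \neq u$, the original witness already lies in $G \setminus \{u\}$. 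In every case $r(\sigma)$ is a simplex of $\N(G\setminus\{u\})$.

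The inclusion $i \colon \N(G\setminus\{u\}) \hookrightarrow \N(G)$ is simplicial because any common neighbor in $G \setminus \{u\}$ remains one in $G$, and $r \circ i = \mathrm{id}$ is immediate since $r$ fixes every vertex different from $u$. For the remaining direction $i \circ r \simeq \mathrm{id}_{\N(G)}$, I would verify that for every simplex $\sigma \in \N(G)$ the union $\sigma \cup r(\sigma)$ is a simplex of $\N(G)$, yielding contiguity. The only nontrivial case is $u \in \sigma$, where $\sigma \cup r(\sigma) = \sigma \cup \{v\}$; the common neighbor $w$ of $\sigma$ satisfies $w \in N(u) \subseteq N(v)$, so $w$ is a common neighbor of $\sigma \cup \{v\}$ as well.

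The main delicate point throughout is the moment when $u$ itself is (or could be chosen as) the witnessing common neighbor of a simplex, and one must reroute that witness to $v$; the containment $N(u) \subseteq N(v)$ is precisely what makes every such substitution admissible. Once this rerouting is applied uniformly, both the well-definedness of $r$ and the contiguity of $i \circ r$ with the identity reduce to the same observation, giving $\N(G) \simeq \N(G \setminus \{u\})$.
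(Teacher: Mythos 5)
Your argument is correct, and it is worth noting that the paper itself gives no proof of this proposition: it is quoted from Babson--Kozlov, where the statement is obtained by combining the homotopy equivalence $\mathrm{Hom}(K_2,G)\simeq \N(G)$ with the fact that a fold induces a homotopy equivalence of $\mathrm{Hom}$ complexes, the latter proved by closure-operator (order-homotopy) techniques on the face poset of the polyhedral complex $\mathrm{Hom}(K_2,G)$. Your route is genuinely different and more elementary: you work directly on the simplicial complex $\N(G)$, exhibit the retraction $r$ induced by $u\mapsto v$ and the inclusion $i$, observe $r\circ i=\mathrm{id}$, and get $i\circ r\simeq \mathrm{id}$ from contiguity, with the single containment $N(u)\subseteq N(v)$ doing all the work. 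What the Babson--Kozlov approach buys is uniformity (the same machinery handles folds for arbitrary $\mathrm{Hom}(T,G)$ and yields the stronger statement that the map is a deformation retraction of polyhedral complexes); what your approach buys is self-containedness and transparency for the special case actually needed here. The case analysis in your verification that $r$ is simplicial is complete --- in particular you correctly handle the case where the witnessing common neighbor of a simplex is $u$ itself by rerouting to $v$, and the observation that a common neighbor of a simplex containing $u$ is automatically distinct from $u$ (since the graph is simple) is exactly the point that makes $r(\sigma)$ land in $\N(G\setminus\{u\})$. The only detail you leave implicit is that $v$ and the vertices whose sole neighbor is $u$ remain non-isolated in $G\setminus\{u\}$, so that the vertex map is well defined on the vertex sets of the neighborhood complexes; this follows immediately from $N(u)\subseteq N(v)$ and simplicity, so the proof stands.
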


\section{Proofs}
In this article $[n]$ denotes the set $\{1,2,\ldots, n\}$. Throughout this paper, all the graphs are finite and simple,
{\it i.e.}, not contain loops.

\begin{proof}[Proof of Theorem \ref{3regular}]
	
	Since, by Proposition \ref{fold}, folding preserve the homotopy type in neighborhood complex, without loss of generality we assume that $G$ cannot be folded onto any of its subgraphs. If the maximal degree of $G$ is $1$, then $G \cong K_2$. If the maximal degree of $G$ is $2$, then $\N(G)$ is a $1$-dimensional complex and the result follows from Remark \ref{remarkshe}. So, assume that maximal degree of $G$ is $3$. In this case, $\N(G)$ is  $2$-dimensional.
	We show that $\N(G)$ collapses  to a $1$-dimensional subcomplex. 
	The $2$-dimensional simplices of $\N(G)$ are the neighborhoods of vertices of degree $3$. Let $\sigma = \{\alpha, \beta, \gamma\}$ be a $2$-simplex  of $\N(G)$. Then, there exists $x\in V(G)$ such that 
	$N(x) = \{\alpha, \beta, \gamma\}$. We consider the following two cases.
	
	\vspace{0.15 cm}
	
	\noindent{\bf{Case 1.}} At least one of the $(\alpha, \beta), (\beta, \gamma)$ or $(\alpha, \gamma) \in E(G)$.
	
	Without loss of generality assume that $(\alpha, \beta) \in E(G)$ (see Figure \ref{c}).
	If $\alpha$ and $\beta$ do not have any  common neighbor other than $x$, then $(\{\alpha, \beta\}, \sigma )$ 
	is a collapsible pair and therefore $\N(G) \searrow \N(G) \setminus \{\sigma, \{\alpha, \beta\}\}$.
	
	Assume $\alpha, \beta$ have a common neighbor $y \neq x$. Since, $G \neq K_4, y \neq \gamma$. If $y \sim \gamma$, then $N(y) = \{\alpha, \beta, \gamma\}$
	and $(\{\alpha, \beta\}, \sigma)$ is a  collapsible pair. If $y \nsim \gamma$,
	then since $N(\alpha) = \{x, \beta , y\}, N(\beta) = \{x, \alpha, y\}$ and maximal degree of $G$ is $3$, we see that 
	$(\{\beta, \gamma\}, \sigma)$ is a collapsible pair.

	\begin{figure} [H]
		\begin{subfigure}[]{0.3\textwidth}
			\centering
			\begin{tikzpicture}
			[scale=0.4, vertices/.style={draw, fill=black, circle, inner
				sep=0.5pt}]
			\node[vertices, label=above:{}] (1) at (0,0) {};
			\node[vertices, label=below:{}] (2) at (0,4) {};
			\node[vertices,label=above:{}] (3) at (4,0) {};
			\node[vertices,label=left:{}] (4) at (4,4) {};
			
			\foreach \to/\from in
			{1/2,1/3,1/4, 2/3, 2/4, 4/3} \draw [-] (\to)--(\from);
			
			\end{tikzpicture}
			\caption{$K_4$} \label{a}
		\end{subfigure}
		\begin{subfigure}[]{0.3\textwidth}
			\centering
			\begin{tikzpicture}
			[scale=0.4, vertices/.style={draw, fill=black, circle, inner
				sep=0.5pt}]
			\node[vertices, label=above:{}] (1) at (0,4.5) {};
			\node[vertices, label=below:{}] (2) at (0,1.5) {};
			\node[vertices,label=above:{}] (3) at (3,4.5) {};
			\node[vertices,label=left:{}] (4) at (1.5,3) {};
			\node[vertices,label=right:{}] (5) at (4.5,3) {};
			\node[vertices,label=right:{}] (6) at (4.5,0) {};
			\node[vertices,label=below:{}] (7) at (1.5,0) {};
			\node[vertices,label=right:{}] (8) at (3,1.5) {};
			\foreach \to/\from in
			{1/2,1/3,1/4, 2/7, 2/8, 4/7, 4/5,3/5, 3/8, 5/6, 6/7, 6/8} \draw [-] (\to)--(\from);
			
			\end{tikzpicture}
			\caption{$T$} \label{b}
		\end{subfigure}
		\begin{subfigure}[]{0.3\textwidth}
			\centering
			\begin{tikzpicture}
			[scale=0.4, vertices/.style={draw, fill=black, circle, inner
				sep=0.5pt}]
			\node[vertices, label=left:{$x$}] (a) at (0,2.5) {};
			\node[vertices, label=right:{$\alpha$}] (b) at (2,0) {};
			\node[vertices,label=right:{$\beta$}] (c) at (2,2.5) {};
			\node[vertices,label=right:{$\gamma$}] (d) at (2,5) {};
			\foreach \to/\from in
			{a/b,a/c,a/d, b/c} \draw [-] (\to)--(\from);
			\end{tikzpicture}
			\caption{} \label{c}
		\end{subfigure}
		\vspace{0.8 cm}
		
		\begin{subfigure}[]{0.4\textwidth}
			\centering
			\begin{tikzpicture}
			[scale=0.4, vertices/.style={draw, fill=black, circle, inner
				sep=0.5pt}]
			\node[vertices, label=left:{$x$}] (a) at (0,2) {};
			\node[vertices, label=below:{$\alpha$}] (b) at (2,0) {};
			\node[vertices,label=below:{$\beta$}] (c) at (2,2) {};
			\node[vertices,label=below:{$\gamma$}] (d) at (2,4) {};
			\node[vertices,label=right:{$y$}] (e) at (4,1) {};
			\node[vertices,label=right:{$z$}] (f) at (6,1) {};
			\foreach \to/\from in
			{a/b,a/c,a/d, b/e,c/e} \draw [-] (\to)--(\from);
			
			\path
			
			(f) edge[bend left= 30] (b) 
			(f) edge[bend right= 30] (c) ;
			
			\end{tikzpicture}
			\caption{} \label{d}
		\end{subfigure}
		\begin{subfigure}[]{0.5\textwidth}
			\centering
			\begin{tikzpicture}
			[scale=0.4, vertices/.style={draw, fill=black, circle, inner
				sep=0.5pt}]
			\node[vertices, label=left:{$x$}] (a) at (4,2) {};
			\node[vertices, label=below:{$\alpha$}] (b) at (6,0) {};
			\node[vertices,label=below:{$\beta$}] (c) at (6,2) {};
			\node[vertices,label=below:{$\gamma$}] (d) at (6,4) {};
			\node[vertices,label=below:{$y$}] (e) at (8,1) {};
			\node[vertices,label=above:{$z$}] (f) at (8,3) {};
			\node[vertices,label=below:{$t$}] (g) at (2,2) {};
			\node[vertices,label=right:{$a$}] (h) at (10,1) {};
			\node[vertices,label=right:{$b$}] (i) at (10,3) {};
			\node[vertices,label=below:{$c$}] (j) at (0,2) {};
			\foreach \to/\from in
			{a/b,a/c,a/d, b/e, c/e, f/d, c/f, e/h, f/i, g/j} \draw [-] (\to)--(\from);
			
			\path
			
			(g) edge[bend left= 25] (d) 
			(g) edge[bend right= 25] (b) ;
			\end{tikzpicture}
			\caption{} \label{e}
		\end{subfigure}
		
		\caption{} \label{ex}
	\end{figure}

	\noindent{\textbf{Case 2.}}
	None of the $(\alpha, \beta), (\beta, \gamma)$ or $(\alpha, \gamma) \in E(G)$.
	
	Since maximal degree of $G$ is $3$, any $1$-dimensional simplex of $\N(G)$
	can be a face of at most three $2$-simplices.
	If there exists a face, say $\{\alpha, \beta\}$ of $\{\alpha, \beta, \gamma\}$ which
	is a face of three $2$-simplices of 
	$\N(G)$ (see Figure \ref{d}), then since $x$ is not contained in any $3$-cycle (a graph on $3$ vertices, where each vertex has degree $2$), $\gamma \neq y, z$. In this case, 
	$(\{ \alpha, \gamma \}, \sigma)$ is a collapsible pair.
	
	Assume none of the  $1$-dimensional face of $\{\alpha, \beta, \gamma\}$ is contained in three maximal simplices, {\it i.e.},
	each face is contained in at most two $2$-simplices of $\N(G)$. If some face of $\sigma$ is contained in only one $2$-simplex, then clearly that face will be a free  face of $\sigma$. So, assume that each face of $\sigma$ contained in exactly two $2$-simplices of $N(G)$  (see Figure \ref{e}).
	If $y = z= t$, then since maximal degree of $G$ is $3$, at least  one of the 1-dimensional face of
	$\{\alpha, \beta , \gamma\}$ will be a free face.
	
	Suppose exactly two elements of $\{y, z, t\}$ are same, say $y = z$. In this case $\{\beta, \gamma\}$ is a free face of
	$\{\alpha, \beta , \gamma\}$. Assume $| \{y, z, t\}| = 3$.  Since, $G$ cannot be folded onto any of its subgraph, degree of $y, z$ and $t$ must be $3$. Let $a, b$ and $c$ are as depicted in Figure \ref{e} and
	$\{a,b,c\} \cap \{\alpha, \beta, \gamma\} = \emptyset$.
	
	If $|\{a, b, c\}| = 3$, then  since the common neighbor of $\gamma$ and $c$ is only $t$, we see that 
	$(\{\gamma, c\}, N(t))$ is a collapsible pair. Hence,
	$\N(G) \searrow \N(G) \setminus \{N(t), \{\gamma, c\}\}$. Now
	$(\{\alpha, \gamma\},$ $ N(x))$ is a collapsible pair in  $\N(G) \setminus \{N(t), \{\gamma, c\}\}$
	and therefore $\N(G) \searrow \N(G) \setminus \{N(x),$ $ N(t),$ $  \{\alpha, \gamma\} , \{\gamma, c\}\}.$

	Now, let $|\{a, b, c\}| = 2$.
	Without loss of generality we can assume that $b= c$. Then $(\{\beta, a\}, N(y))$
	is a collapsible pair and therefore $\N(G) \searrow \N(G) \setminus \{N(y), \{\beta, a\}\}$.
	Now  $(\{\alpha, \beta\}, N(x))$ is a collapsible pair in  $\N(G) \setminus \{N(y), \{\beta, a\}\}$
	and therefore $\N(G) \searrow \N(G) \setminus \{N(x), N(y), $ $ \{\alpha, \beta\} , \{\beta,a\}\}.$ 
	If $a = b= c$, then  $G \cong T$, which is not possible.

	Thus $\N(G)$ collapses to a $1$-dimensional subcomplex.
	From Remark \ref{remarkshe}, each connected component of $\N(G)$ is either contractible or 
	homotopy equivalent to wedge sum of circles.  
	
\end{proof}

\begin{rmk}
	It has been shown in \cite{l} that the neighborhood complex of any non-bipartite graph is connected and is never homotopically trivial. Therefore,
	the nieghborhood complex of any non-bipartite graph of maximal degree at most $3$ is homotopy equivalent to a wedge sum of circles.  
\end{rmk}

We now fix some notations. Throughout this article, if we write an integer $r$ as a vertex of $C_n(s,t)$,
it is understood that we are taking $r$ modulo $n$. Further,
for any two vertices $v_1$ and $ v_2$ of $C_n(s, t)$, $v_1 = v_2$ means  $v_1 \equiv v_2 \ (\text {mod} \ n)$. Since $n$ is fixed, for any two integers
$x$ and $y$ such that $x \equiv y \ (\text{mod} \ n)$, if no confusion arises,
we just write $x \equiv y$.  

For any set $X \subset \mathbb{Z}$ and 
any integer $r$, let  $X+r = \{x+r \ | \ x \in X \}$. Observe that, for any $k \in [n]= V(C_n(s,t))$,
the neighborhood of $k$,  $N(k) = \{s+k, t+k, n-s+k, n-t+k\}$. Since $n \geq 5$,
if $s, t \neq \frac{n}{2}$, then $C_n(s,t)$ is a $4$-regular graph, {\it i.e.}, $|\{s+k, t+k, n-s+k, n-t+k\}| = 4$.
Since $N(k+r) = N(k)+r$ for any $k, r \in [n]$,
observe that any two  connected components of $\N(C_n(s,t))$
are homeomorphic. 

\vspace{0.3 cm}

\begin{proof}[Proof of Theorem \ref{THEOREM1}]
	If  $2s = n $ or $2t = n$, then $C_n(s,t)$ will be a $3$-regular graph and the result follows from Theorem \ref{3regular}.
	Assume $2s, 2t \neq n$ and $2(s+t) = n$. We consider the following two cases.
	
	\vspace{0.15 cm}
	
	\noindent{\textbf{Case 1.}}  $|t-s| = \frac{n}{4}$.
	
	Without loss of generality we can assume that $t > s$ and let $t-s = m$.
	Then  $4m = n, s+t = 2m, 2s = m$. Let  $k \in [n].$ Now,
	$N(k+m) = \{s+k+m, t+k+m, n-s+k+m, n-t+k +m\}$ and 
	$N (k) = \{s+k, t+k, n-s+k, n-t+k\}$. Here, 
	$ s+k + m =  k+t$, 
	$t + k+ m  = n - t + k$, 
	$n - s + k +m \equiv m-s+k  = s+k$ and 
	$n - t + k + m = n-s+k$. Hence $N(k) = N(k+m) = N(k+2m) = N(k+3m)$ for all $k \in [n]$.
	For any $i, j \in [m], i \neq j$,  it can be easily check that $N(i) \cap N(j) = \emptyset$. 
	
	Thus 
	$\mathcal{N}(C_n(s,t))$ is consists of $m$ disjoint  simplices of dimension $3$ and therefore it is homotopy equivalent to $m$ distinct points.
	
	\vspace{0.15 cm}
	
	\noindent{\textbf{Case 2.}} $|t-s| \neq \frac{n}{4}$. 
	
	Let $t+s = p.$ Then $n = 2p$.
	Since $n-t+k = n-t-s+s+k = p+s+k$,
	$n-s+k = p+t+k$,
	we see that 
	$N(k) = N(k+p)$ for all $k \in [n]$.  
	Since $N(s+k) = \{2s+k, s+t+k, k, 2s+p+k\}$ and $N(t+k) = \{s+t+k, 2t+k,  2t+p+k, k\}$,
	$N(s+k) \cap [p] = N(t+k) \cap [p]$ implies that  $\{ 2s+k, 2s+p+k \} \cap  [p] = \{2t+k, 2t+p+k\} \cap [p]$. Since 
	$s \neq t, 2s+k \not\equiv 2t+k$. However,  $2s+k \equiv 2t+p+k$ implies that
	$ 3t \equiv s$. Since $t, s < \frac{n}{2}$, $3t = s$ or $3t = s+n$. If $3t = s$, then $ p= 4t$.
	But, then $s-t = 2t = \frac{p}{2} = \frac{n}{4}$, which is a contradiction. If $3t = s+n = s+2t+2s$,
	then $3s = t$ and $p = 4s$. Here,  $t - s = 2s =  \frac{p}{2} = \frac{n}{4}$, which is not possible. By an argument similar as above, 
	$2s+p+k \notin \{2t+k, 2t+p+k\}$. Hence $N(s+k) \cap [p] \neq N(t+k) \cap [p] \ \forall \ k \in [n]$.

	Since $N(s+k) = N(p+s+k) = N(n-t+k), N(t+k) = N(p+t+k) = N(n-s+k)$, we conclude that 
	each vertex $k$ belongs to exactly two maximal simplices, namely
	$N(s+k)$ and $N(t+k)$. Further, $N(i) \cap [p] \neq N(j) \cap [p] \ \forall \ 1 \leq i \neq j \leq p$.

	Observe that,  $|N(k) \cap \{1,\ldots, p\} |= 2$ for all $k \in [n]$.
	Since $N(k) = N(k+p)$ for all $k \in [n]$,  $C_n(s,t)$ folded onto the induced
	subgraph $C_n(s,t)[\{1,\ldots, p\}]$. Hence 
	$\mathcal{N}(C_n(s,t)) \simeq \mathcal{N}(C_n(s,t)[\{1, \ldots, p\}])$,
	by Proposition \ref{fold}. Since each vertex $k$ of $C_n(s,t)$
	belongs to exactly two maximal simplices $N(s+k) = N(p+s+k)$ and $N(t+k) = N(p+t+k)$ of $\N(C_n(s,t))$, and 
	$N(i) \cap [p] \neq N(j) \cap [p] \  1 \leq i \neq j \leq p$, we see that  
	each vertex  $x \in [p]$ also belongs to exactly two maximal simplices of
	$\mathcal{N}(C_n(s,t)[\{1, \ldots, p\}])$. Further, since $\mathcal{N}(C_n(s,t)[\{1, \ldots, p\}])$
	is a $1$-dimensional complex, the connected components of $\mathcal{N}(C_n(s,t)[\{1, \ldots, p\}])$ cannot be contractible. The 
	result follows from Remark \ref{remarkshe}.
	
\end{proof}

%
\begin{proof}[Proof of Theorem \ref{THEOREM2}(A)]
	Let $3s = t$ and $n = 10s$. For each $1 \leq i \leq s$, let $G_i$ be the subgraph of $C_n(s,t)$ induced by the vertex set $\{i, i+s, i+2s, i+3s, i+4s, i+5s, i+6s, i+7s, i+8s, i+9s\}$. Observe that each $G_i$ is isomorphic to $C_{10}(1,3)$ and $C_n(s, t) \cong \bigsqcup\limits_{i=1}^{s}G_i$. Therefore $\N(C_n(s, t)) \cong \bigsqcup\limits_{i=1}^{s} \N(C_{10}(1,3))$. It can be easily verified that $\N(C_{10}(1,3))$ is homeomorphic to disjoint union of two copies of simplicial boundary of a $4$-simplex, namely the subcomplex  $N(1) \cup N(3) \cup N(5) \cup N(7) \cup N(9) = Bd(\{2, 4, 6,8,10\})$  and the subcomplex $N(2) \cup N(4) \cup N(6) \cup N(8) \cup N(10) = Bd(\{1,3,5,7,9\})$.
	
	We conclude that 
	$\N(C_n(s, t)) \cong \bigsqcup\limits_{2s-copies} S^3$. The case $3t=s$ and $n = 10t$, follows by symmetry. 
	
\end{proof}

%
%
%
%
%

\begin{proof}[Proof of Theorem \ref{THEOREM2}(B)]
	Let $3s = t$ and $n = 12s$. 
	For each $1 \leq i \leq s$, let $G_i$ be the subgraph of $C_n(s,t)$ induced by the vertex set $\{i, i+s, i+2s, i+3s, i+4s, i+5s, i+6s, i+7s, i+8s, i+9s, i+10s, i+11s\}$. Observe that each $G_i$ is isomorphic to $C_{12}(1,3)$ and $C_n(s, t) \cong \bigsqcup\limits_{i=1}^{s}G_i$. We now compute $\N(C_{12}(1,3))$.

	For each $i \in \{1 , 2\}$, let $A_i$ be the subcomplex of $\N(C_{12}(1,3))$, where the set of maximal simplices $M(A_i) = \{N(i), N(i+2), N(i+4), N(i+6), N(i+8), N(i+10)\}$. Then $\N(C_{12}(1,3))$ = $A_1 \sqcup A_2$. It can be easily checked that for each $k$, $ (\{k+1, k+3, k+9\}, N(k))$ is a collapsible pair in $\N(C_{12}(1,3))$
	and therefore $N(k) \searrow  \tau_k^1 = \{k+1, k+3, k+11\},
	\tau_k^2 = \{k+3, k+9, k+11\}$ and $\tau_k^3 = \{k+1, k+9, k+11\}$. Since,  $\tau_{k+10}^1 = \{k+11, k+1, k+9\} = \tau_{k}^3$ for all $k \in [12]$, we see that 
	$\N(C_{12}(1,3))$ collapses to a subcomplex $\Delta$, where the  set of maximal simplices
	$M(\Delta) = \{\tau_k^i \ | \ i \in \{1,2\}, k \in [12]\}$.  For, $i \in \{1,2\}$, let $\Delta_i= \Delta \cap A_i$. Then, $\Delta = \Delta_1 \sqcup \Delta_2$. It can be easily verified that each   $\Delta_i$ is a shellable complex and the shelling order is given by 
	\begin{eqnarray} \label{shellingorder}
	\tau_i^1, \tau_{i}^2, \tau_{i+2}^1, \tau_{i+2}^2, \tau_{i+4}^1, \tau_{i+4}^2, \tau_{i+6}^2, \tau_{i+6}^1,
	\tau_{i+8}^1, \tau_{i+8}^2, \tau_{i+10}^1, \tau_{i+10}^2.
	\end{eqnarray}
	
	Here,  $\tau_{i+8}^2$
	and $\tau_{i+10}^2$ are spanning simplices with respect to the shelling order \ref{shellingorder}.  Hence, $\Delta_i \simeq S^2 \vee S^2$ and by Proposition \ref{shellable}.  We conclude that $\N(C_n(s, t)) \cong \bigsqcup\limits_{2s-copies} S^2\vee S^2$.

	If $3t = s$ and $n = 12t$, the result follows by symmetry. 
\end{proof}
\begin{proof}[Proof of Theorem \ref{THEOREM2}(C)]
	Let $3s = t$ and $8s, 10s, 12s \neq n$. Observe that, $2s, 4s, 6s, 8s, $ $12s \not\equiv 0$. 
	For each $k \in [n], N(k) = \{s+k, 3s+k, n-s+k, n-3s+k\}$.
	Since $4s, 6s, 8s, 10s, $ $ 12s \not\equiv 0 $, we see that 
	$N(3s+k) \cap N(n-3s+k) = \{k\}$ and therefore $(\{3s+k, n-3s+k\}, N(k))$
	is a collapsible pair. Hence,
	$N(k) \searrow \tau_k^1 = \{3s+k, s+k, n-s+k\}$ and $\tau_k^2 = \{n-3s+k, s+k, n-s+k\}$ $\forall \ k \in [n]$. Since 
	$\tau_{n-2s+k}^1 = \{s+k, n-s+k, n-3s+k\} = \tau_k^2$, $\N(C_{n}(s,t))$
	collapses to a subcomplex $\Delta$, where 
	$M(\Delta) = \{\tau_k^1 \ |  \ k \in [n]\}$. 
	
	Using the fact that $2s, 4s, 6s, 8s, 12s \not\equiv 0$, it can be easily check that 
	$N(3s+k) \cap N(n-s+k) = \{2s+k, k\}$. Since $\{3s+k, n-s+k\} \nsubseteq \tau_{2s+k}^1 = \{5s+k, 3s+k,s+k \}$,
	we conclude that
	$(\{3s+k, n-s+k\}, \tau_k^1)$ is a collapsible pair.
	Hence, for each $k \in [n]$, 
	$\tau_k^1 \searrow \delta_k^1 = \{3s+k, s+k\}$ and
	$\delta_k^{2} = \{n-s+k, s+k\}$. Since 
	$\delta_{n-2s+k}^1 = \{s+k, n-s+k\} = \delta_k^2$. Applying the collapsible pairs 
	of the type $(\{3s+k, n-s+k\}, \tau_k^1)$ for each $k \in [n]$, $\Delta$ collapses to a 
	$1$-dimensional subcomplex $\Delta'$, 
	where $M(\Delta') = \{\delta_k^1  \ | \  k \in [n]\}$.
	
	Since each vertex $k \in [n]$ belongs to $\delta_{n-s+k}^1$ and $\delta_{n-3s+k}^1$,
	and $\delta_{n-s+k}^1 \neq \delta_{n-3s+k}^1$,  connected components of $\Delta'$ cannot be contractible. Therefore,
	by Proposition \ref{shellable}, each connected component of $\Delta'$ is homotopy equivalent to wedge sum of circles.
\end{proof}

\begin{proof}[Proof of Theorem \ref{THEOREM3}(A)]
	Let $5s = 3t$ and $n = 4t$. Since $s \neq t, n \neq 4s$. Further, since $3s+t = n$ implies that $s = t$ and  $3s-t = n$ implies that
	$3s = 5t = \frac{25}{3}s$, we see that $3s+t, 3s-t \neq n $. The result follows from  Theorem \ref{gerland}.
	The case $5t = 3s$ and $n = 4s$, follows by symmetry. 
	
\end{proof} 

\begin{proof}[Proof of Theorem \ref{THEOREM3}(B)]
	Let $5s = 3t$ and $n = 4s$. Since $5$ and $3$ are relatively prime, there exists $s'$ such that $s = 3s'$ and $t = 5s'$.
	Here, $n = 12s'$ and 
	$N(k) = \{3s'+k, 5s'+k, 9s'+k, 7s'+k\} \ \forall \ k \in [n]$. For  $i \in \{1 , \ldots, 2s'\}$, let $B_i$ be the subcomplex of $\N(C_n(s,t))$, where 
	$M(B_i) = \{N(i +2s'l) \ | \ l \in \{0,1,2,3,4,5\} \}$. It can be easily checked that the set of vertices of $B_i$ is given by 
	$V(B_i) = \{ i+s', i+3s', i+5s', i+7s', i+9s', i+11s'\}$ and
	$V(B_l) \cap V(B_m) = \emptyset \ \forall \ 1 \leq l \neq m \leq 2s'.$ 
	Hence $B_l \cap B_m = \emptyset \ \forall \ 1 \leq l \neq m \leq 2s'$.
	
	Let $k \in [n]$. Since $N(3s'+k) \cap N(5s'+k) \cap N(9s'+k) = \{k\}$, 
	$(\{3s'+k, 5s'+k, 9s'+k\}, N(k))$ is a collapsible pair. Therefore 
	$N(k) \searrow \tau_k^1 = \{3s'+k, 9s'+k, 7s'+k\},
	\tau_{k}^2 = \{5s'+k, 9s'+k, 7s'+k\}$ and $\tau_k^3 = \{3s'+k, 5s'+k, 7s'+k\}$. Since,  $\tau_{n-2s'+k}^2 = 
	\{3s'+k, 7s'+k, 5s'+k\} = \tau_{k}^3$, we conclude that  $\N(C_n(s,t))$ 
	collapses to a subcomplex $\Delta$, where
	$M(\Delta) = \{\tau_{k}^i \ | \ i \in \{1,2\}, k \in [n]\}$.

	For $i \in \{1, \ldots, 2s'\}$, let $\Delta_i = \Delta \cap B_i$. Since
	$B_i \cap B_j = \emptyset, \Delta_i \cap  \Delta_j = \emptyset \ \forall \ 1 \leq i \neq j \leq 2s'$. Further,
	$M(\Delta_i) = \{\tau_{i+2s'l}^j \ | \ j \in \{1,2\},
	l \in \{0,1,2,3,4,5\}\}$ $\forall \ 1 \leq i \leq 2s'$.
	
	We now show that each $\Delta_i$ is a shellable complex and the shelling order is given by 
	\begin{eqnarray} \label{shellingorder2}
	\tau_i^1, \tau_{i}^2, \tau_{i+2s'}^1, \tau_{i+2s'}^2, \tau_{i+4s'}^1, \tau_{i+4s'}^2, \tau_{i+6s'}^1, \tau_{i+6s'}^2,
	\tau_{i+8s'}^1, \tau_{i+8s'}^2, \tau_{i+10s'}^1, \tau_{i+10s'}^2. 
	\end{eqnarray}
	For each $i \in [n]$ and $0 \leq l \leq 5$, let $\tau_{i+2ls'} = \tau_i^1 \cup \tau_i^2 \cup \ldots \cup \tau_{i+2ls'}^1 \cup \tau_{i+2ls'}^2$. The following are easy to verify.
	
	$\tau_i^1 = \{3s'+i, 9s'+i,7s'+i \}, \tau_i^2 = \{5s'+i, 9s'+i, 7s'+i\}$ and 
	$M(\tau_i^1 \cap \tau_i^2) = \{\{9s'+i, 7s'+i\}\}$.
	
	$\tau_{i+2s'}^1 = \{5s'+i, 11s'+i, 9s'+i\}$ and $M(\tau_i \cap \tau_{i+2s'}^1) = \{\{5s'+i, 9s'+i\}\}$.
	
	$\tau_{i+2s'}^2 = \{7s'+i, 11s'+i, 9s'+i\}$ and $M((\tau_{i} \cup \tau_{i+2s'}^1) \cap \tau_{i+2s'}^2)
	= \{\{11s'+i, 9s'+i\}, \{9s'+i, 7s'+i\}\}$.
	
	$\tau_{i+4s'}^1 = \{7s'+i, s'+i, 11s'+i\}$ and $M( \tau_{i+2s'} \cap \tau_{i+4s'}^1)
	= \{\{7s'+i, 11s'+i\}\}$.
	
	$\tau_{i+4s'}^2 = \{9s'+i, s'+i, 11s'+i\}$ and $M(( \tau_{i+2s'} \cup \tau_{i+4s'}^1) \cap 
	\tau_{i+4s'}^2) = \{\{s'+i, 11s'+i\}, \{9s'+i, 11s'+i\}\}$.
	
	$\tau_{i+6s'}^1 = \{9s'+i, 3s'+i, s'+i\}$ and $M(\tau_{i+4s'} \cap 
	\tau_{i+6s'}^1) = \{\{3s'+i, 9s'+i\}, \{s'+i, 9s'+i\}\}$.
	
	$\tau_{i+6s'}^2 = \{11s'+i, 3s'+i, s'+i\}$ and $M(( \tau_{i+4s'} \cup \tau_{i+6s'}^1) \cap 
	\tau_{i+6s'}^2) = \{\{11s'+i, s'+i\}, \{3s'+i, s'+i\}\}$.
	
	$\tau_{i+8s'}^1 = \{11s'+i, 5s'+i, 3s'+i\}$ and $M( \tau_{i+6s'} \cap 
	\tau_{i+8s'}^1) = \{\{11s'+i, 3s'+i\}, \{11s'+i, 5s'+i\}\}$.

	$\tau_{i+8s'}^2 = \{s'+i, 5s'+i, 3s'+i\}$ and $M(( \tau_{i+6s'} 
	\cup \tau_{i+8s'}^1) \cap 
	\tau_{i+8s'}^2) = \{\{s'+i, 3s'+i\}, \{5s'+i, 3s'+i\}\}$.
	
	$\tau_{i+10s'}^1 = \{s'+i, 7s'+i, 5s'+i\}$ and $M( \tau_{i+8s'} \cap 
	\tau_{i+10s'}^1) = Bd(\tau_{i+10s'}^1)$.
	
	$\tau_{i+10s'}^2 = \{3s'+i,7s'+i,5s'+i\}$ and $M(( \tau_{i+8s'} \cup \tau_{i+10s'}^1) \cap 
	\tau_{i+10s'}^2) = Bd(\tau_{i+10s'}^2)$.

	Thus the order given in (\ref{shellingorder2}) is a shelling order and the spanning simplices are $\tau_{i+10s'}^1$
	and $\tau_{i+10s'}^2$. The result follows by Proposition \ref{shellable}. 
	The case $5t = 3s$ and $n = 4t$, follows by symmetry.
	
\end{proof}
We need the following lemma to prove Theorem \ref{THEOREM3} (C).

\begin{lem} \label{collapse1} Let $ s, t \in \{1, \ldots, n-1\}$ such that $2s,2(s+t), 3s+t, 3s-t, 4s \not\equiv 0 \ ( \text{mod} \ n)$. Then 
	for each $k \in [n]$, $(\{s+k, n-s+k\}, N(k))$ is a collapsible pair in $\N(C_n(s, t))$.
\end{lem}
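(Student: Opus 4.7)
The plan is to verify the two conditions for $(\{s+k,\, n-s+k\},\, N(k))$ to be a collapsible pair: (i) $\{s+k,\, n-s+k\}$ is a proper face of $N(k)$, and (ii) $N(k)$ is the unique maximal simplex of $\N(C_n(s,t))$ that contains this edge. Part (i) is a quick check: $2s \not\equiv 0$ gives $s+k \neq n-s+k$, and the hypotheses $2s,\, 2(s+t) \not\equiv 0$ (together with the implicit $s \neq t$) prevent the four residues $s,\, t,\, -s,\, -t$ from collapsing to fewer than three distinct values mod $n$, so $|N(k)| \geq 3 > 2$.

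The heart of the argument is (ii). Since $N(j) = \{s+j,\, t+j,\, -s+j,\, -t+j\}$, the conditions $s+k \in N(j)$ and $n-s+k \in N(j)$ force $j-k$ to lie in $A = \{0,\, s-t,\, 2s,\, s+t\}$ and in $B = \{0,\, t-s,\, -2s,\, -(s+t)\}$ respectively, all modulo $n$. I will enumerate the nine pairwise equalities between a nonzero element of $A$ and one of $B$; each collapses to one of $2s \equiv 0$, $4s \equiv 0$, $3s-t \equiv 0$, $3s+t \equiv 0$, $2(s+t) \equiv 0$, or $2(s-t) \equiv 0$. The first five are precisely what the hypothesis forbids, leaving only the possibility $j-k \equiv t-s \equiv s-t$, i.e.\ $2(s-t) \equiv 0$.

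The main obstacle is this residual case $2(s-t) \equiv 0$, which is \emph{not} among the forbidden conditions. Here $j = k + (t-s)$ does belong to $A \cap B$ and is distinct from $k$, so a priori there might be a second maximal simplex of $\N(C_n(s,t))$ through $\{s+k,\, n-s+k\}$. I plan to dispose of this by a direct computation: using $2t \equiv 2s$, one finds $2t - s \equiv s$ and $t - 2s \equiv -t$, whence $N(k + (t-s)) = \{t+k,\, 2t-s+k,\, t-2s+k,\, -s+k\} = \{s+k,\, t+k,\, -s+k,\, -t+k\} = N(k)$. Thus although two distinct indices $j$ witness the inclusion, they label the same simplex, so $N(k)$ remains the unique maximal simplex containing $\{s+k,\, n-s+k\}$ and the collapsible pair is established.
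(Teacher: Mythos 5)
Your proof is correct and follows essentially the same route as the paper: both reduce the uniqueness of the maximal simplex through $\{s+k,\,n-s+k\}$ to the nine congruences obtained by equating a nonzero element of $\{s-t,\,2s,\,s+t\}$ with one of $\{t-s,\,-2s,\,-(s+t)\}$, each of which is one of the forbidden relations except for $2(s-t)\equiv 0$. The one place you diverge is in that residual case: the paper dismisses it with ``since $s\neq t$,'' which is only valid under the standing convention $s,t\le \tfrac{n}{2}$ made earlier in the paper (otherwise $s-t\equiv \tfrac{n}{2}$ is possible, e.g.\ $n=10$, $s=1$, $t=6$ satisfies all the stated congruence hypotheses), whereas you treat it head-on by computing that $2t\equiv 2s$ forces $N(k+t-s)=N(k)$, so the two witnessing indices label the same simplex. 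Your version is therefore slightly more robust relative to the lemma's literal hypotheses $s,t\in\{1,\ldots,n-1\}$; the conclusion and the overall structure of the argument are the same either way.
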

\begin{proof}
	If there exists $x \in [n], x \neq k$ such that $\{s+k, n-s+k\} \subset N(x)$,
	then $x \in \{2s+k, s+t+k,n-t+s+k\} \cap \{n-s+t+k, n-2s+k, n-t-s+k\}$.
	
	Since, $0 \not\equiv  3s-t, 2s+k \neq n-s+t+k$. Further, since   
	$2s+k = n-2s+k$ implies that $  4s \equiv 0$ and  $2s+k = n-t-s+k$ implies that $3s+t \equiv 0$, we conclude that $x \neq 2s+k$.
	
	$ s+t+k = n-s+t+k \implies 2s \equiv 0$ and   $s+t+k = n-2s+k \implies 3s+t \equiv 0$. Further,
	since $s+t+k = n-t-s+k \implies 0 \equiv 2(s+t)$, $x \neq s+t+k $.

	Since, $s \neq t$ and $3s-t \not\equiv 0$, we conclude that $n-t+s+k \neq n-s+t+k, n-2s+k, n-t-s+k$.   
	
	Thus there exists no $x \in [n]$ different from $k$ such that $\{s+k, n-s+k\} \subset N(x)$ and
	therefore $(\{s+k, n-s+k\}, N(k))$ is a collapsible pair.
\end{proof}

\begin{proof}[Proof of Theorem \ref{THEOREM3}(C)]
	Let $5s = 3t$ and  one of $6s$ or $\frac{14s}{3}$ is equal to $n$. There exists an integer $s'$ such that $s = 3s'$ and $t = 5s'$.

	\vspace{0.15 cm}
	
	\noindent{\bf{Case 1.}} $n = 6s$.
	
	In this case $n = 18s'$ and $ N(k) = \{3s'+k, 5s'+k, 13s'+k, 15s'+k\}$ for all  $k \in [n]$. Since $3s-t = n \implies \frac{4s}{3} = n$ and
	$3s+t = n \implies \frac{14s}{3} = n$, we see that
	$3s-t, 3s+t \neq n$. Further, $s, t < \frac{n}{2}$ implies that $3s-t, 3s+t \not\equiv n$. Using Lemma \ref{collapse1}, $N(k) \searrow \tau_k^1 = \{3s'+k, 5s'+k, 13s'+k\}$
	and $\tau_k^2 = \{15s'+k, 5s'+k, 13s'+k\}$ for all $k \in [n]$. Since, $\tau_{k+10s'}^1 = \{13s'+k, 15s'+k, 5s'+k\} = \tau_{k}^2$, $\N(C_n(s, t))$
	collapses to a subcomplex $\Delta$, where $M(\Delta) = \{\tau_{k}^1 \ | \ k \in [n]\}$.

	Let $k \in [n]$. Observe that $N(3s'+k) \cap N(5s'+k) = \{k, 8s'+k\}$ and $\{3s'+k, 5s'+k\}\nsubseteq \tau_{8s'+k}^1$. Hence 
	$(\{3s'+k, 5s'+k\}, \tau_k^1)$ is a collapsible pair in $\Delta$ and therefore 
	$\tau_k^1 \searrow \delta_{k}^1 = \{3s'+k, 13s'+k\}$ and $\delta_k^2 = \{5s'+k, 13s'+k\}$. Since 
	$\delta_{k+10s'}^1 = \{13s'+k, 5s'+k\} = \delta_k^2$, we see that 
	$\Delta$ collapses to a $1$-dimensional subcomplex $\Delta'$,
	with $M(\Delta') = \{\delta_k^1 \ | \ k \in [n]\}$. 
	Each vertex $x \in [n]$, belongs to $\delta_{5s'+x}^1 = \{8s'+x,x \}$
	and $\delta_{15s'+x}^1 = \{x, 10s'+x\}$. Since $\delta_{5s'+x}^1 \neq \delta_{15s'+x}^1$, 
	connected components of  $\Delta'$ cannot be contractible.
	Result follows from
	Remark \ref{remarkshe}.
	
	\vspace{0.15 cm}
	
	\noindent{{\bf Case 2.}} $n = \frac{14s}{3}$.
	
	In this case $n = 14s'$ and  
	$N(k) = \{3s'+k, 5s'+k, 11s'+k, 9s'+k\} \ \forall \ k \in [n]$.
	Since $3t-s = n \implies 4s = n$ and $3t+s = n \implies  6s = n$, we see that $3t-s, 3t+s \neq n$. Further, since
	$4t \neq n$, by Lemma \ref{collapse1}, $(\{5s'+k, 9s'+k\}, N(k))$ is a collapsible pair and therefore 
	$N(k) \searrow \tau_k^1 = \{5s'+k, 3s'+k, 11s'+k\}$ and $\tau_k^2 = \{9s'+k, 3s'+k, 11s'+k\}$. Since,
	$\tau_{k+6s'}^1 = \{11s'+k, 9s'+k, 3s'+k\} = \tau_{k}^2$, we conclude that 
	$\N(C_n(s, t))$ collapses to a subcomplex $\Delta$, where $M(\Delta) = \{\tau_k^1 \ | \ 
	k \in [n]\}$.
	
	$N(3s'+k) \cap N(5s'+k) = \{k, 8s'+k\}$ and   $\{3s'+k, 5s'+k\}
	\nsubseteq \tau_{8s'+k}^1$ implies that  $(\{3s'+k, 5s'+k\}, \tau_k^1)$ is a collapsible pair in 
	$\Delta$ and therefore $\tau_k^1 \searrow \delta_{k}^1 = \{3s'+k, 11s'+k\}$ and 
	$\delta_k^2 = \{5s'+k, 11s'+k\}$. Further, since $\delta_{8s'+k}^1 = \{11s'+k, 5s'+k\} = \delta_{k}^2$, 
	we conclude that $\Delta$ collapses to a $1$-dimensional subcomplex $\Delta'$, where $M(\Delta') = \{\delta_k^1 \ | \ k \in [n]\}$.
	
	Each vertex $x \in [n]$, belongs to $\delta_{3s'+x}^1 = \{6s'+x, x\}$ and $\delta_{11s'+x}^1 = \{x, 8s'+x\}$. 
	Since $\delta_{3s'+x}^1 \neq \delta_{11s'+x}^1$, the result follows from Remark \ref{remarkshe}.
	
	If $5t = 3s$ and  one of $6t$ or $\frac{14t}{3}$ is equal to $n$,  the result follows by symmetry. 
	
\end{proof}

\begin{proof}[Proof of Theorem \ref{THEOREM3}(D)]
	Let $5s = 3t$ and $n \neq 4s, 4t, 6s, \frac{14s}{3}$. Now, $3s - t = n \implies \frac{4s}{3} = n,$ which is not possible as $s < \frac{n}{2}$.
	Further, since  $3s+t = n \implies \frac{14s}{3} = n, 3t+s = n \implies 6s = n$ and  $3t-s = n \implies 
	4s = n$, we see  that $3s-t,3s+t, 3t+s, 3t-s \neq n$. The result follows from Theorem \ref{tori}.
	The case $5t = 3s$ and $n \neq 4s, 4t, 6t, \frac{14t}{3}$, follows by symmetry.

\end{proof}

\begin{proof}[Proof of Theorem \ref{THEOREM4}(A)]
	Let us first assume that one of the $3s-t, 3t-s, 3s+t \ \text{or}\ 3t+s$ is equal to $ n$.
	
	\vspace{0.15 cm}
	
	\noindent{\textbf{Case 1.}}  $3t - s = n$ or $3s-t = n$. 
	
	Assume that $3t-s = n$. We consider the following two cases.
	\begin{itemize}
		\item[(i)] $2s \neq t$.
		
		In this case, $3s+t, 3s-t, 4s \not\equiv n$. By Lemma \ref{collapse1}, 
		$N(k) \searrow \tau_k^{1} = \{s+k, t+k, n-t+k\}$ and $\tau_k^{2} = \{n-s+k, t+k, n-t+k\}$
		for all $k \in [n]$. Further, since  $\tau_{k}^2 = \tau_{t-s+k}^1$, 
		$\N(C_{n}(s,t))$ collapses to a subcomplex $\Delta$, with 
		$M(\Delta) = \{ \tau_{i}^1 \ | \ i \in [n]\}$. 
		It can be easily checked that  
		$N(s+k) \cap N(n-t+k) = \{k, 2t+k\}$. Since,
		$\{s+k, n-t+k\} \nsubseteq \tau_{2t+k}^1$, 
		$\tau_{k}^1 \searrow \delta_{k}^1 = \{s+k, t+k\}$ and $\delta_k^2 = \{n-t+k, t+k\}$ for all $k \in [n]$. Now, 
		$\delta_{t-s+k}^1 = \{t+k, n-t+k\} = \delta_{k}^2$ implies that 
		$\Delta$ collapses to a $1$-dimensional subcomplex $\Delta'$,
		where $M(\Delta') = \{\delta_{i}^1 \ | \ i \in [n]\}$.

		Each vertex $k \in [n]$ can belongs to  only  $\delta_j^{1}$ for $j \in \{s+k, t+k, n-s+k, n-t+k\}$. 
		Since, $\delta_{s+k}^1 = \{2s+k, s+t+k\},
		\delta_{t+k}^1 = \{s+t+k, 2t+k\}, \delta_{n-s+k}^1 = \{k, n-s+t+k\}, \delta_{n-t+k}^1 = \{n+s-t+k, k\}$,
		we observe that $k$ belongs to only  $ \delta_{n-s+k}^1$ and $ \delta_{n-t+k}^1$.
		Further, since $\delta_{n-s+k}^1 \neq \delta_{n-t+k}^1$ and
		$\Delta'$ is a $1$-dimensional complex, each connected component of $\Delta'$ is homotopy equivalent to $S^1$. 
		
		\item[(ii)] $2s = t$.
		
		In this case, $ n = 5s$ and $N(k) = \{s+k, 2s+k, 3s+k, 4s+k\} \ \forall \ k \in [n]$.  
		For   $i \in [n]$, let $\Gamma_i := \{i, s+i \ (\text{mod} \ n), 2s+i \ (\text{mod} \ n), 3s+i \ (\text{mod} \ n), 
		4s+i \ (\text{mod} \ n)\}$ be a $4$-simplex.
		It can be easily checked that $\Gamma_i \cap \Gamma_j = \emptyset \ \forall \ 1 \leq i \neq j \leq s.$ 
		The subcomplex of $\N(C_n(s,t))$ induced by the vertices 
		$k, k+s, k+2s, k+3s$ and  $k+4s$, which is equal to $N(k) \cup N(k+s) \cup N(k+2s) \cup N(k+3s) \cup N(k+4s)$
		is Bd$(\Gamma_k)$. We conclude that 
		$\N(C_n(s, t)) \cong \bigsqcup\limits_{k \in \{ 1, 2, \ldots, s\}} Bd(\Gamma_k)$. Since $Bd(\Gamma_k)$ is homeomorphic
		to $S^3$, the result follows. 
		
		The case $3s-t= n$ follows from symmetry.
	\end{itemize}

	\noindent{\textbf{Case 2.}} $3s-t, 3t-s \neq n$, {\it i.e.}, $3t+s = n$ or $3s+t = n$.
	
	Assume that $3t+s = n$.
	\begin{itemize}
		\item[(i)] $2t \neq s$.
		
		In this case,  $3s+t, 3s-t, 4s \not\equiv n$. From Lemma \ref{collapse1},  $N(k) \searrow \tau_k^1 = \{s+k, t+k, n-t+k\}$ 
		and $\tau_k^2 = \{n-s+k, t+k, n-t+k\}$ for all $k \in [n]$. Further, since  $\tau_k^2 = \tau_{2t+k}^1$, 
		$\N(C_n(s, t))$ collapses to a subcomplex $\Delta$, where $M(\Delta) = \{\tau_k^1 \ | \ k \in [n]\}$.
		
		It can be easily verified that $N(s+k) \cap N(t+k) = \{k, s+t+k\}$. Since,
		$\{s+k, t+k\} \nsubseteq \tau_{s+t+k}^1 = \{2s+t+k,s+2t+k, s+k \}$, $\tau_k^1 \searrow \delta_k^1 = \{s+k, n-t+k\}$ and 
		$\delta_k^2 = \{t+k, n-t+k\}$ for all $k \in [n]$. Further, since
		$\delta_{2t+k}^1 = \{2t+s+k, t+k \} = \{n-t+k, t+k\} = \delta_{k}^2$, 
		$\Delta$ collapses to a $1$-dimensional subcomplex $\Delta'$, where 
		$M(\Delta') = \{\delta_i^1 \ | \ i \in [n]\}$.

		Each vertex $k \in [n]$ can belong to  only $\delta_j^{1}$ for $j \in \{s+k, t+k, n-s+k, n-t+k\}$.
		Since, $\delta_{s+k}^1 = \{2s+k, n-t+s+k\}, \delta_{t+k}^1 = \{s+t+k, k\}, 
		\delta_{n-s+k}^1 =  \{k, 2t+k\}$ and $
		\delta_{n-t+k}^1= \{n-t+s+k, t+s+k\}$, we conclude that  $k$ belongs to only $ \delta_{t+k}^1$ and $\delta_{n-s+k}^1$.
		Since $\Delta'$ is a $1$-dimensional complex, each connected component of $\Delta$ is homotopy equivalent to $S^1$.
		
		\item[(ii)] $2t = s$.

		Since $n = 3t+s$ and $s= 2t, n = 5t$.  For each  $k \in [n]$, let $\Gamma_k := \{k, t+k (\text{mod} \ n), 2t+k (\text{mod} \ n), 
		3t+k(\text{mod} \ n), 4t+k (\text{mod} \ n)\}$ be a $4$-simplex.
		It can be easily verified that $\Gamma_i \cap \Gamma_j = \emptyset \ \forall \ 1 \leq i \neq j \leq t$ and 
		the subcomplex of $\N(C_n(s,t))$ induced by the vertices 
		$k, k+t, k+2t, k+3t$ and  $k+4t$,  is equal to $N(k) \cup N(k+t) \cup N(k+3t) \cup N(k+3t) \cup N(k+4t) = Bd(\Gamma_k)$. We conclude that 
		$\N(C_n(s, t)) \cong \bigsqcup\limits_{k \in \{ 1, 2, \ldots, t\}} Bd(\Gamma_k)$.

	\end{itemize}

	The case $3s+t = n$ follows by symmetry. 
	
\end{proof}

Proof of Theorem \ref{THEOREM4} (B) follows from Theorem \ref{gerland} and Proof of Theorem \ref{THEOREM4} (C) follows from  Theorem \ref{tori}.

\begin{thm}\label{gerland}
	Let $s, t \in \{1, 2, \ldots, \lfloor \frac{n}{2}\rfloor\}$ such that $2s, 2t, 2(s+t), 3s+t, 3s-t, 4s \neq n$, $3s \neq t, 3t \neq s$. If $4t = n$, then each connected 
	component of $\N(C_n(s,t))$ is homotopy equivalent to a garland of  the $2$-dimensional spheres $S^2$.
\end{thm}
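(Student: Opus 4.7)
The plan is to first collapse $\N(C_n(s,t))$ onto a $2$-dimensional subcomplex $\Delta$ via Lemma \ref{collapse1}, and then partition the maximal simplices of $\Delta$ into $2t$ tetrahedral-boundary spheres that assemble cyclically into a garland.

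The theorem's hypotheses $2s, 2(s+t), 3s-t, 3s+t, 4s \neq n$ together with $3s \neq t$ force the non-vanishing conditions $2s, 2(s+t), 3s \pm t, 4s \not\equiv 0 \pmod n$ required by Lemma \ref{collapse1}. Hence for each $k \in [n]$, the pair $(\{s+k, n-s+k\}, N(k))$ is collapsible, giving a collapse onto the subcomplex $\Delta$ whose maximal simplices are
$\tau_k^1 = \{s+k, t+k, n-t+k\}$ and $\tau_k^2 = \{n-s+k, t+k, n-t+k\}$ for $k \in [n]$. Using $n=4t$, I have $n-t+k \equiv 3t+k \equiv -t+k \pmod n$, so the edge $\{t+k, 3t+k\}$ lies in the four triangles $\tau_k^1, \tau_k^2, \tau_{k+2t}^1, \tau_{k+2t}^2$ --- a $4$-page ``book.'' A short verification using the remaining hypotheses shows every other edge of $\Delta$ is contained in exactly two maximal simplices.

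Now I would partition the vertex set $[n]$ into $2t$ antipodal pairs $P_i := \{i, i+2t\}$ for $i \in \mathbb{Z}_{2t}$, and set $g := s-t \pmod{2t}$. A direct computation shows that the four spoke vertices at the central edge $P_i$ distribute as two in $P_{i+g}$ and two in $P_{i-g}$. For each $i \in \mathbb{Z}_{2t}$, define $X_i \subset \Delta$ to be the boundary of the $3$-simplex on the four-element vertex set $P_i \cup P_{i+g}$. The four triangles of $X_i$ can then be matched with specific maximal simplices of $\Delta$: namely $\tau_{i-t}^1, \tau_{i+t}^1$ (the two triangles with central edge $P_i$ and spoke in $P_{i+g}$) and $\tau_{i+s}^2, \tau_{i+s+2t}^2$ (the two triangles with central edge $P_{i+g}$ and spoke in $P_i$). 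Hence each $X_i$ is isomorphic to $\partial \Delta^3 \cong S^2$, and the $2t$ spheres $\{X_i\}_{i \in \mathbb{Z}_{2t}}$ exhaust the $2n = 8t$ triangles of $\Delta$, each triangle lying in a unique $X_i$.

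It remains to check the cyclic garland structure. Since distinct antipodal pairs are disjoint subsets of $[n]$, two spheres $X_i$ and $X_j$ share a vertex if and only if $\{i,i+g\} \cap \{j,j+g\} \neq \emptyset$ in $\mathbb{Z}_{2t}$, forcing $j \in \{i-g, i, i+g\}$. Adjacent spheres $X_i, X_{i+g}$ intersect exactly in the contractible $1$-simplex $P_{i+g}$, and non-adjacent spheres are disjoint. The sphere adjacency graph is the Cayley graph of $\mathbb{Z}_{2t}$ with generator $g$, which decomposes into $\gcd(g, 2t)$ cycles of length $2t/\gcd(g, 2t)$. Each such cycle parametrises a connected component of $\Delta$; since each adjacent intersection is contractible, collapsing these intersections to points realises the component as a genuine garland of $2t/\gcd(g, 2t)$ copies of $S^2$ in the paper's sense.

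The main technical obstacle will be the explicit matching in the second step: verifying that the four triangles of $\partial \Delta^3$ on $P_i \cup P_{i+g}$ really are the four named $\tau_k^j$'s, and that different values of $i$ yield genuinely distinct, non-degenerate sphere subcomplexes. This requires a careful substitution using $n = 4t$ and $g = s-t$. The hypotheses $3s \neq t, 3t \neq s, 2s \neq n$ enter precisely here, ruling out degeneracies such as $g \equiv 0$ or $g \equiv t \pmod{2t}$ which would collapse spheres onto one another, while the remaining inequalities guarantee that Lemma \ref{collapse1} applies in the first place.
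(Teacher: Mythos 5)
Your proposal is correct and follows essentially the same route as the paper: after the Lemma \ref{collapse1} collapse, your spheres $X_i$ on the vertex sets $P_i\cup P_{i+g}$ coincide exactly with the paper's $Bd(\Gamma_k)$ (with $\Gamma_k=\{t+k,n-t+k,s+k,s+2t+k\}$ and $i=t+k$), and your matching of the four faces with $\tau_{i-t}^1,\tau_{i+t}^1,\tau_{i+s}^2,\tau_{i+s+2t}^2$ is precisely the paper's identification. The subsequent adjacency analysis (consecutive spheres meeting in a single contractible edge, yielding a garland up to homotopy) is also the paper's argument, so no gap remains beyond the routine verifications you already flag.
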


\begin{proof} 
	By Lemma \ref{collapse1}, $\N(C_n(s,t))$ collapses to a $2$-dimensional subcomplex $\Delta$, whose maximal simplices
	$M(\Delta) = \{\tau_{k}^i \ | \ i \in \{1,2\}, k \in [n]\}$, where 
	$\tau_{k}^1 = \{s+k, t+k, n-t+k\}$ and $\tau_k^2 = \{n-s+k, t+k, n-t+k\}$.
	For any integer  $i$, 
	let $\Gamma_i := \{t+i \ (\text{mod} \ n), n-t+i \ (\text{mod} \ n), s+i \ (\text{mod} \ n), s+2t+i \ (\text{mod} \ n)\}$ be a $3$-simplex.
	Clearly, the simplicial complexes
	$\tau_k^1 \cup \tau_{s+t+k}^2 \cup  \tau_{2t+k}^1  \cup \tau_{n-t+s+k}^2 = Bd(\Gamma_k)$ and 
	$ \tau_{n-s-t+k}^1 \cup \tau_{k}^2 \cup \tau_{n-s+t+k}^1 \cup \tau_{2t+k}^2 = 
	Bd(\Gamma_{n-s-t+k})$. Hence, each $1$-simplex of $\Delta$ is part of Bd$(\Gamma_i)$ for some $i \in [n]$.
	Thus $\Delta = \bigcup\limits_{1 \leq i \leq n} Bd(\Gamma_i)$.
	
	In the rest of the proof, if we write an integer $x$ as a vertex of $ \Gamma_i$ for some $i$, then
	it is  understood that we are taking $x (\text{mod} \ n)$.  
	Now $Bd(\Gamma_{n-s-t+k}) \cap Bd(\Gamma_{k}) = \{t+k , n-t+k\}$ and
	$Bd(\Gamma_k) \cap Bd(\Gamma_{k+s+t}) = \{s+k, s+2t+k\}$, {\it i.e.}, $Bd(\Gamma_k)$
	shares a common $1$-simplex $\{t+k , n-t+k\}$ with $Bd(\Gamma_{n-s-t+k})$ and a common $1$-simplex
	$\{s+k, s+2t+k\}$ with $\Gamma_{s+t+k}$.
	
	To show that each component of $\Delta$ is homotopy equivalent to a garland of the $2$-dimensional spheres, 
	it is enough to show that each vertex $k$ belongs to exactly the boundaries of  two $3$-simplices.
	Let $x \in [n]$. There exists $k \in [n]$ such that $x \in Bd(\Gamma_k)$, {\it i.e.},
	$x \in \{t+k, n-t+k, s+k, s+2t+k\}$. It is clear from the above discussion that
	there exists $k' \neq k$ such that $x \in Bd(\Gamma_{k'})$, {\it i.e.}, $x \in \{t+k', n-t+k', s+k', s+2t+k'\}$. Since 
	$n = 4t$,
	$\Gamma_{k+2t} = \Gamma_{k} \ \forall \ k \in [n]$.
	Let $x = t+k$. 
	If $x \equiv t+k' $, then $k \equiv k'$, a contradiction. If $x \equiv n-t+k'$, then
	$k' \equiv k-2t$. But $\Gamma_{k-2t} = \Gamma_{k}$.
	Since,  $t+k \equiv s+k'$ implies that $k' \equiv t-s+k$ and  $t+k \equiv s+2t+k'$ implies that  
	$k' \equiv k-t-s$, we conclude that $x \in Bd(\Gamma_{k})$ and $Bd(\Gamma_{n-s-t+k})$ only.
	By a similar  argument  as the one above, we can easily verify that, 
	$n-t+k \in Bd(\Gamma_{k}) \cap Bd(\Gamma_{n-s-t+k})$,  $s+k \in Bd(\Gamma_{k}) \cap Bd(\Gamma_{s+t+k})$ and 
	$s+2t+k \in Bd(\Gamma_k) \cap Bd(\Gamma_{s+t+k})$ only.

	Thus $x$  belongs to either $Bd(\Gamma_k) \cap Bd(\Gamma_{n-s-t+k})$ or $Bd(\Gamma_{k}) \cap Bd(\Gamma_{s+t+k})$ only.

\end{proof}

A {\it $d$-dimensional pseudo manifold} is a pure $d$-dimensional simplicial complex
such that every ($d-1$)-simplex is a face of exactly two $d$-simplices.
A (topological) {\it $n$-manifold} is a hausdorff space $X$ such that every point  $x \in X$,
has a neighborhood which is homeomorphic to   $\mathbb{R}^n$. A $2$-manifold is called a {\it surface}.

\begin{thm} \label{tori}
	Let $s, t \in \{1, 2, \ldots, \lfloor \frac{n}{2}\rfloor\}$ such that $2s, 2t, 2(s+t), 3s+t, 3t+s, 3s-t, 3t-s, 4s, 4t \neq n, 3s \neq t$ and $ 3t\neq s$. Then each connected component 
	of $\N(C_{n}(s, t))$ is homotopy equivalent  to connected sum of tori. 
\end{thm}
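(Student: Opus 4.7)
The plan is to follow the template of the proof of Theorem \ref{gerland}. By Lemma \ref{collapse1}, whose hypotheses are implied by those of the present theorem, $\N(C_n(s,t))$ collapses to the $2$-dimensional subcomplex $\Delta$ with maximal simplices $\tau_k^1 = \{s+k, t+k, n-t+k\}$ and $\tau_k^2 = \{n-s+k, t+k, n-t+k\}$ for $k \in [n]$. The strategy is then to show that each connected component of $\Delta$ is a closed orientable surface and invoke the classification of surfaces to conclude that each component is a connected sum of tori.

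The first step is to show that $\Delta$ is a $2$-dimensional pseudomanifold, i.e., every $1$-simplex is a face of exactly two maximal simplices. For each of the five edge-types $\{s+k,t+k\}$, $\{s+k,n-t+k\}$, $\{t+k,n-t+k\}$, $\{n-s+k,t+k\}$, $\{n-s+k,n-t+k\}$, I would enumerate the $\tau_j^\ell$'s that could contain such an edge and use the non-congruences $2s, 2t, 2(s+t), 3s \pm t, 3t \pm s, 4s, 4t \not\equiv 0 \pmod n$ together with $3s \neq t$ and $3t \neq s$ to exclude all extraneous containments. For instance, the edge $\{s+k,t+k\}$ is contained in $\tau_k^1$ and, as the unique second containment, in $\tau_{s+t+k}^2$; every other potential containment is forced to violate one of the hypotheses.

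The second step is to upgrade $\Delta$ to a $2$-manifold by verifying that the link of any vertex is a single cycle. Translation by $\mathbb{Z}_n$ acts transitively on the vertices of $\Delta$ and preserves the collection of $\tau_k^i$'s, so it suffices to check a single vertex $k$. This vertex lies in exactly six maximal simplices $\tau_{k+t}^1, \tau_{k-t}^1, \tau_{k-s}^1, \tau_{k+t}^2, \tau_{k-t}^2, \tau_{k+s}^2$, contributing six link-edges on six distinct link-vertices (distinctness again uses the hypotheses, for example the link-vertex coincidence $t-s+k \equiv -2t+k$ would force $3t \equiv s$). A direct trace shows these six edges assemble into a single $6$-cycle.

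Finally, for orientability I would orient $\tau_k^1$ as $[s+k, t+k, n-t+k]$ and $\tau_k^2$ as $[n-s+k, n-t+k, t+k]$; these choices are $\mathbb{Z}_n$-equivariant, so one only needs to verify for each of the five edge-types that the two induced orientations on the shared $1$-simplex are opposite, a short computation. Once $\Delta$ is established as a closed orientable surface, the classification theorem gives each component as a connected sum of tori, the genus being determined by the Euler characteristic. The main obstacle is the combinatorial bookkeeping in the pseudomanifold and link steps: the non-congruence hypotheses each play a specific role in excluding a particular collision, and the difficulty lies in exhaustively tracking all of them.
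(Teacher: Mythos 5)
Your proposal is correct and follows essentially the same route as the paper: collapse $\N(C_n(s,t))$ via Lemma \ref{collapse1}, verify the resulting complex is a pseudomanifold and in fact a closed surface by analysing the five edge-types and the vertex links, establish orientability, and invoke the classification of surfaces. The only real divergence is in excluding $S^2$: the paper does this by a $\mathbb{Z}_2$-homology rank count combined with universal coefficients, whereas you appeal to the Euler characteristic --- which does close the argument, since every vertex lies in exactly six triangles so each component satisfies $V=F/2$, $E=3F/2$, hence $\chi=0$ (in fact yielding the sharper conclusion that each component is a single torus) --- and your orientation-compatibility check across the five edge-types is the same computation as the paper's verification that $\sum_k(\tau_k^1+\tau_k^2)$ is a fundamental $2$-cycle.
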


We recall the following result to prove Theorem \ref{tori}.

\begin{prop} \label{universal}(Theorem 3A.3, \cite{h}) \\
	If $C$ is a chain complex of free abelian groups, then there exist short exact sequences
	\begin{center}
		$0 \longrightarrow H_n(C; \mathbb{Z}) \otimes \mathbb{Z}_2 \longrightarrow H_n(C; \mathbb{Z}_2) \longrightarrow$
		Tor$(H_{n-1}(C; \mathbb{Z}), \mathbb{Z}_2) \longrightarrow 0$
	\end{center}
	for all n and these sequences split.
	
\end{prop}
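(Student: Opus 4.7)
The plan is to prove the universal coefficient theorem by decomposing the chain complex $C$ into its cycle and boundary subgroups — both free abelian, since subgroups of free abelian groups are free — and passing to the long exact sequence in homology of a split short exact sequence of chain complexes that remains exact after tensoring with $\mathbb{Z}_2$.

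First I would let $Z_n = \ker(\partial_n) \subset C_n$ and $B_n = \mathrm{im}(\partial_{n+1}) \subset C_n$. Because $B_{n-1}$ is a subgroup of the free abelian group $C_{n-1}$, it is itself free, so the short exact sequence $0 \to Z_n \to C_n \xrightarrow{\partial_n} B_{n-1} \to 0$ splits in every degree. Viewing $Z_\bullet$ and the shifted complex $B_{\bullet-1}$ as chain complexes with zero differential, this assembles into a degreewise split short exact sequence of chain complexes $0 \to Z \to C \to B[-1] \to 0$. Tensoring with $\mathbb{Z}_2$ preserves the splitting and hence the exactness, and the resulting long exact sequence in homology — noting that $Z$ and $B[-1]$ carry zero differentials, so their homologies are just the groups themselves, and that the connecting map is induced by the inclusion $\iota_n : B_n \hookrightarrow Z_n$ — breaks into three-term pieces
\begin{equation*}
0 \to \mathrm{coker}(\iota_n \otimes 1) \to H_n(C;\mathbb{Z}_2) \to \ker(\iota_{n-1} \otimes 1) \to 0.
\end{equation*}
Right-exactness of $-\otimes \mathbb{Z}_2$ applied to $0 \to B_n \to Z_n \to H_n(C) \to 0$ identifies $\mathrm{coker}(\iota_n \otimes 1) \cong H_n(C) \otimes \mathbb{Z}_2$, while the same sequence, read as a length-one free resolution of $H_{n-1}(C)$, identifies $\ker(\iota_{n-1} \otimes 1) \cong \mathrm{Tor}(H_{n-1}(C), \mathbb{Z}_2)$ directly from the definition of Tor.

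To produce the splitting, I would choose sections $\sigma_n : B_{n-1} \to C_n$ of $\partial_n$, giving decompositions $C_n = Z_n \oplus \sigma_n(B_{n-1})$. After tensoring with $\mathbb{Z}_2$, the differential $\partial_n \otimes 1$ vanishes on the $Z_n \otimes \mathbb{Z}_2$ summand and restricts on $\sigma_n(B_{n-1}) \otimes \mathbb{Z}_2 \cong B_{n-1} \otimes \mathbb{Z}_2$ to $\iota_{n-1} \otimes 1$. Computing cycles modulo boundaries of $C \otimes \mathbb{Z}_2$ under this decomposition then yields the explicit direct-sum decomposition $H_n(C;\mathbb{Z}_2) \cong (H_n(C) \otimes \mathbb{Z}_2) \oplus \mathrm{Tor}(H_{n-1}(C), \mathbb{Z}_2)$, which splits the asserted sequence. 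The main obstacle is the splitting itself: it is well-known to be non-natural in $C$, so one cannot hope to construct it functorially, and the argument must depend on the arbitrary choices of $\sigma_n$. Every other step — the split short exact sequence of cycles and boundaries, preservation of exactness under $-\otimes \mathbb{Z}_2$, identification of the connecting homomorphism, and the Tor computation from a length-one free resolution — is a routine application of standard homological algebra.
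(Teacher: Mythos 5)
Your proposal is correct and is essentially the proof given in the paper's cited source (Hatcher, Theorem 3A.3): the paper itself states this proposition without proof, and your argument — splitting $0 \to Z_n \to C_n \to B_{n-1} \to 0$ using freeness of $B_{n-1}$, tensoring the resulting degreewise split exact sequence of complexes with $\mathbb{Z}_2$, identifying the connecting map with $\iota_{n-1}\otimes 1$, and reading off $H_n(C)\otimes\mathbb{Z}_2$ and $\mathrm{Tor}(H_{n-1}(C),\mathbb{Z}_2)$ from the free resolution $0\to B_{n-1}\to Z_{n-1}\to H_{n-1}(C)\to 0$ — is the standard one. The splitting via chosen sections $\sigma_n$ and your caveat about its non-naturality are likewise exactly as in the reference.
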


\begin{proof}[Proof of Theorem \ref{tori}]
	By Lemma  \ref{collapse1}, 
	$\N(C_{n}(s,t))$ collapses to a subcomplex $X$, with $M(X) = \{\tau_k^i \ | \ i \in \{1,2\}, k \in [n]\}$, 
	where $\tau_{k}^1 = \{s+k, t+k, n-t+k\}$  and $\tau_{k}^2 = \{n-s+k, t+k, n-t+k\}$.
	
	\begin{claim} \label{manifold}
		$X$ is a $2$-dimensional pseudo manifold.
	\end{claim}
	\begin{proof}[Proof of Claim \ref{manifold}]
		It is enough to show that any $1$-dimensional face of
		$\tau_{k}^1$ and $\tau_k^2$, {\it i.e.}, $\sigma_1^k = \{t+k, n-t+k\}, \sigma_2^k = \{s+k, t+k\},
		\sigma_3^k = \{n-t+k, n-s+k\}, \sigma_4^k = \{t+k, n-s+k\}$ and  $ \sigma_5^k = \{s+k, n-t+k\}$ 
		are faces of exactly two $2$-dimensional simplices of $X$.
		
		Clearly, $\sigma_1^k$ is a face of $\tau_k^1$ and $\tau_k^2$. By Lemma \ref{collapse1}, $(\sigma_1^k, N(k))$ is a collapsible pair and therefore  $\sigma_1^k$ 
		is not a face of any $\tau_{k'}^1$ or $\tau_{k'}^2$ for all $k' \neq k.$

			\begin{figure}[H]
			\begin{subfigure}[]{0.4\textwidth}
				\vspace{0.3 cm}
				\centering
				\begin{tikzpicture}
				[scale=0.3, vertices/.style={draw, fill=black, circle, inner sep=0.5pt, minimum size = 0pt,}]
				
				\node[vertices, label=below:{$\scriptstyle s+2t+k$}] (a) at (7, 1) {};
				\node[vertices, label=below:{$\scriptstyle 3t+k$}] (b) at (13,1) {};
				\node[vertices, label=left:{$\scriptstyle s+k$}] (c) at (1,7) {};
				\node[vertices, label=below:{$\scriptstyle x = t+k$}] (d) at (7,7) {};
				\node[vertices, label=right:{$\scriptstyle 2t-s+k$}] (e) at (13,7) {};
				\node[vertices, label=left:{$\scriptstyle n-t+k$}] (f) at (1,13) {};
				\node[vertices, label=above:{$\scriptstyle n-s+k$}] (g) at (7,13) {};

				\foreach \to/\from in
				{ a/b, a/d,a/c,b/d,b/e, c/d,c/f,d/e,d/f,d/g,f/g,e/g} \draw [-] (\to)--(\from);

				\path (4.8,1) -- (4.8,7) node[pos=0.7, text=blue]  {$\scriptstyle \tau_{k+t+s}^2$};
				\path (9,1) -- (9,7) node[pos=0.3, text=blue] {$\scriptstyle \tau_{k+2t}^1$};
				\path (11,1) -- (11,7) node[pos=0.7, text=blue] {$\scriptstyle \tau_{k+2t}^2$};
				\path (2.8,7) -- (2.8,13) node[pos=0.3, text=blue] {$\scriptstyle \tau_k^1$ };
				\path (4.8,7) -- (4.8,13) node[pos=0.7, text=blue] {$\scriptstyle \tau_{k}^2$};
				
				\path (9,7) -- (9,13) node[pos=0.3, text=blue] {$\scriptstyle \tau_{k+t-s}^1$};

				\end{tikzpicture}
				\caption{}\label{surface1}
			\end{subfigure}
			\begin{subfigure}[]{0.4\textwidth}
				\centering
				\begin{tikzpicture}
				[scale=0.34, vertices/.style={draw, fill=black, circle, inner sep=0.5pt, minimum size = 0pt, }]
				\tikzset{edge/.style = {->,> = latex'}}

				\node[vertices, label=below:{$\scriptstyle s+k$}] (a) at (7, 7) {};
				\node[vertices, label=below:{}] (b) at (13,1) {};
				\node[vertices, label=left:{}] (c) at (1,13) {};
				\node[vertices, label=above:{$\scriptstyle n-t+k$}] (d) at (7,13) {};
				\node[vertices, label=right:{}] (e) at (7,19) {};
				\node[vertices, label=below:{$\scriptstyle t+k$}] (f) at (13,7) {};
				\node[vertices, label=right:{$\scriptstyle n-s+k$}] (g) at (13,13) {};
				\node[vertices, label=above:{}] (h) at (19,7) {};

				\foreach \to/\from in
				{ a/b, a/c,  b/f, c/d, d/e, e/g,  g/h, f/h} \draw [-] (\to)--(\from);

				\path (10.6,1) -- (10.6,7) node[pos=0.7, text=blue] {$\scriptstyle \tau_{k+s+t}^2$};
				
				\path (8.6,8) -- (8.6,15) node[pos=0.95, text=blue] {$\scriptstyle \tau_{k-s-t}^1$};
				
				\path (5,7) -- (5,13) node[pos=0.65, text=blue] {$\scriptstyle \tau_{k-t+s}^2$ };
				\path (11,7) -- (11,13) node[pos=0.6, text=blue] {$\scriptstyle \tau_{k}^2$};
				
				\path (8,7) -- (8,13) node[pos=0.3, text=blue] {$\scriptstyle \tau_{k}^1$};
				\path (14.5,7) -- (14.5,13) node[pos=0.3, text=blue] {$\scriptstyle \tau_{k+t-s}^1$};

				\path (10,7) -- (10,13) node[pos=0.35, text=red] {$\scriptstyle \sigma_1^k$};
				\path (7,6.5) -- (13,6.5) node[pos=0.5, text=red] {$\scriptstyle \sigma_2^k$};
				\path (7,12.5) -- (13,12.5) node[pos=0.5, text=red] {$\scriptstyle \sigma_3^k$};
				\path (13.5,7) -- (13.5,13) node[pos=0.6, text=red] {$\scriptstyle \sigma_4^k$};
				\path (7,7) -- (7,13) node[pos=0.35, text=red] {$\scriptstyle \sigma_5^k$};

				\draw[edge] (7.5,10.5) to[bend left=85] (9,8); 
				
				\draw[edge] (10.5,12) to[bend left=85] (12,9.5); 
				
				\draw[edge] (14.3,10.2) to[bend left=85] (15.8,7.7); 
				
				\draw[edge] (10.5,6) to[bend left=85] (12,3.5); 
				
				\draw[edge] (8.5,16) to[bend left=85] (10,13.5); 
				
				\draw[edge] (4.8,12) to[bend left=85] (6.3,9.5); 
				
				\draw[edge] (d) to (f); 
				\draw[edge] (f) to (a);
				\draw[edge] (d) to (g);
				\draw[edge] (g) to (f);
				\draw[edge] (a) to (d);
				
				\end{tikzpicture}
				\caption{}\label{surface2}
			\end{subfigure}
			\caption{} \label{surface}
		\end{figure}

			$\sigma_2^k \subset \tau_{k+s+t}^2 = \{k+t, s+2t+k, s+k\}$. Since 
		$s, t < \frac{n}{2}, s+2t+k \equiv n-t+k$
		implies that $3t+s = n$, which is not possible. Hence $\tau_{k+s+t}^2 \neq \tau_{k}^1$. Further, $3s-t $ and $3t-s \neq n$ implies that 
		$N(s+k) \cap N(t+k) = \{k, s+t+k\}$. Since, $\sigma_2^k \nsubseteq \tau_{s+t+k}^1$, 
		$\sigma_2^k$ is a face of $\tau_{k}^1$ and $\tau_{k+s+t}^2$ only.
		Since $n-t+k \equiv s+ (k-s-t)$ and  $n-s+k \equiv t+(k-s-t)$, 
		we see that, $\sigma_3^k$ is a face of $\tau_{k-s-t}^1$ and $\tau_{k}^2$ only.

		$\sigma_4^k \subset \tau_{k+t-s}^1 = \{t+k, 2t-s+k, n-s+k\}$ and $\sigma_4^k \nsubseteq \tau_{k+t-s}^2$. Since $2t-s+k \equiv n-t+k$ 
		implies that $3t-s = n$, we see that $\tau_{k+t-s}^1 \neq \tau_{k}^2$. Further, $2(s+t), 3s+t, 3t+s \neq n$ implies that 
		$N(t+k)  \cap N(n-s+k) = \{k, k-t+s\}$. Thus $\sigma_4^k$ is a face of $\tau_k^2$ and $\tau_{k+t-s}^1$ only.
		Since $s+k = t+(k-t+s)$ and $n-t+k = n-s+(k-t+s)$, we see that, $\sigma_5^k$ is a face of $\tau_{k-t+s}^2$ and $\tau_k^1$ only.
	\end{proof}

	\begin{claim} \label{surfacemanifold}
		$| |X| |$, the geometric realization of $X$,  is a surface, {\it i.e.}, a $2$-manifold. 
	\end{claim}
	\begin{proof}[Proof of Claim \ref{surfacemanifold}]
		Let $x \in$ $| |X| |$. Since, $X$ is a pseudomanifold, if $x$ belongs to
		interior of some $1$-simplex or $2$-simplex,
		then we can easily construct an open neighborhood of $x$, homeomorphic to
		$\mathbb{R}^2$. Assume $x$ is a $0$-simplex of $X$.
		Without loss of generality, we can  assume that  $x = t+k$, for some $k \in [n]$.
		Then $x \in \tau_{k}^1, \tau_{k}^2, \tau_{k+t-s}^2, \tau_{k+2t}^2, \tau_{k+2t}^1$ and
		$\tau_{k+t+s}^2$(see Figure \ref{surface1}). Using the fact that $2(s+t), 3t-s, 3t+s, 4t \not\equiv 0$, it can be easily checked that
		$|\{s+k, n-t+k, n-s+k, 2t-s+k, 3t+k, s+2t+k\}| = 6$. 
		Since $N(x) = \{k, k+t-s, k+2t, k+s+t\}$, $x \notin \tau_{m}^1 \ \text{or} \ \tau_{m}^2$
		for all $m \neq k, k+t-s, k+2t, k+s+t$. Hence  we can easily construct a neighborhood of $x$ in $| |X| |$,
		which is homeomorphic to $\mathbb{R}^2.$ Thus $X$ is a surface.
	\end{proof}

	We now define the orientations for simplices of $X$. For any oriented simplex $\sigma$ we write $+ \sigma$,
	if it is positively oriented and  $- \sigma$, otherwise. We denote any positively oriented $2$-simplex
	with vertex set $\{a,b,c\}$ by $\langle a, b , c \rangle$ and 
	$1$-simplex with vertex set $\{a,b\}$ by $\langle a, b \rangle$. 
	
	For any $k \in [n]$, we define $+ \tau_k^1 = \langle s+k, n-t+k, t+k \rangle$ and
	$+ \tau_k^2  = \langle n-t+k, n-s+k, t+k \rangle$. Further, we define
	$+ \sigma_1^k = \langle n-t+k, t+k \rangle, + \sigma_2^k = \langle t+k, s+k \rangle, +\sigma_3^k = \langle n-t+k,
	n-s+k \rangle,
	+ \sigma_4^k = \langle n-s+k, t+k \rangle$ and $+ \sigma_5^k = \langle s+k, n-t+k \rangle.$
	Observe that any $1$-simplex of $X$ is equal to $\sigma_i^k$
	for some $k \in [n]$ and $i \in \{1,2,3,4,5\} $.
	
	Let $C = (C_i, \partial_i)$ be the  simplicial chain complex of $X$ with coefficients in $\mathbb{Z}_2$.
	Since, there are $n$ $0$-simplices in $X, C_0 \cong \mathbb{Z}_2^n$. Further,
	since there are $2n$ simplices of dimension $2$ and $X$ is a pseudo manifold, $C_1 \cong \mathbb{Z}_2^{3n}$
	and $C_2 \cong \mathbb{Z}_2^{2n}$. Since there is no simplex of any other dimension in $X$, $C_i = 0 $
	for all $i \neq 1,2,3$. Thus 
	\begin{center}
		$C  =  0 \longrightarrow \mathbb{Z}_2^{2n} 
		\overset{\partial_2}{\longrightarrow} \mathbb{Z}_2^{3n} \overset{\partial_1}{\longrightarrow} \mathbb{Z}_2^n
		\overset{\partial_0}{\longrightarrow} 0$
	\end{center}
	Let $p$ be the number of connected components of $X$. It is well known that
	$H_0(X; \mathbb{Z}_2) \cong \mathbb{Z}_2^{p}$.
	Since $p \geq 1$, Rank$(\partial_1) \leq n-1$. If
	Ker $\partial_1 \cong \mathbb{Z}_2^{r}$, then  $r \geq 3n-n+1 = 2n+1$. Since Rank$(\partial_2)$
	can be at most $2n$, $H_1(X; \mathbb{Z}_2) \neq 0$. 
	From Proposition \ref{universal}, $H_1(X; \mathbb{Z}_2) \cong   H_1(X; \mathbb{Z}) \otimes \mathbb{Z}_2 $
	$\oplus$Tor$(H_{0}(X; \mathbb{Z}), \mathbb{Z}_2)$.  Since
	$H_0(X; \mathbb{Z})$ $  \cong \mathbb{Z}^p$, Tor$(H_0(X); \mathbb{Z}_2) = 0$.
	So $H_1(X; \mathbb{Z}) = 0$, implies that $H_1(X; \mathbb{Z}_2) = 0$, which is a contradiction.
	Hence $H_1(X; \mathbb{Z}) \neq 0.$

	Let $D = (D_i, d_i)$ be the simplicial chain complex of $X$ with $\mathbb{Z}$  coefficients. Then 
	\begin{center}
		$D =  0 \longrightarrow \mathbb{Z}^{2n} 
		\overset{d_2}{\longrightarrow} \mathbb{Z}^{3n} \overset{d_1}{\longrightarrow} \mathbb{Z}^n
		\overset{d_0}{\longrightarrow} 0$
	\end{center}
	
	Let $c = \sum\limits_{k \in [n]} ( (+\tau_k^1) + (+\tau_k^2))$ be a $2$-chain.  
	It can be easily verified (see Figure \ref{surface2}) that 
	$\sigma_1^k$ has $+$ve orientation in $\tau_k^1$ and $-$ve orientation in $\tau_k^2$.
	$\sigma_2^k$ has $+$ve orientation in $\tau_k^1$ and $-$ve in $\tau_{k+s+t}^2$.
	Similarly $\sigma_3^k$ has $+$ve orientation in $\tau_k^2$ and $-$ve in $\tau_{k-t-s}^1$.
	The simplex $\sigma_4^k$ having $+$ve orientation in $\tau_k^2$ and $-$ve in $\tau_{k+t-s}^1$,
	and $\sigma_5^k$ has $+$ve orientation in $\tau_k^1$ and $-$ve in $\tau_{k-t+s}^2$.
	Since $X$ is a pseudo manifold, each $1$-simplex of $X$
	will occur twice in  $d_2(c)$, once with $+$ve sign and once with $-$ve sign.   Hence $d_2(c) = 0$.
	Since $\tau_k^i \neq 0$ in $D_2 = \mathbb{Z}^{2n}, 0 \neq  c  \in $ Ker $d_2$. Hence
	$H_2(X; \mathbb{Z}) \neq 0$.
	
	From Claim \ref{surfacemanifold}, each component of $X$ is a compact surface.
	From the classification of surfaces, we know that  any connected compact surface is
	homeomorphic either to $S^2$, to a connected sum of tori or to a connected sum of projective planes.
	
	Since $H_1(X; \mathbb{Z}) \neq 0$, the connected components of $X$ cannot be
	homeomorphic to $S^2$. Further, since $H_2(S; \mathbb{Z}) = 0$ for any non orientable surface $S$, 
	the connected components of $X$ cannot be homeomorphic to a connected sum of projective planes.
	Hence each connected component of $X$ is homeomorphic to a connected sum of tori.
	
\end{proof}

Let $\Delta$ be a simplicial complex. An {\it $m$-path} in $\Delta$ is a sequence $\sigma_1 \ldots \sigma_t, t \geq 2$
of $m$-simplices  such that $\sigma_i$ and $\sigma_{i+1}$ have a common $(m-1)$-dimensional face, for all $ 1\leq i \leq t-1$.
Further, if $\sigma_1$ and $\sigma_t$ have a common $(m-1)$-dimensional face, then it is said to be closed $m$-path.

	\begin{figure}[H]

	\centering
	\begin{tikzpicture}
	[scale=0.35, vertices/.style={draw, fill=black, circle, inner sep=0.5pt, minimum size = 0pt, }]
	
	\node[vertices, label=below:{$\scriptstyle s+k$}] (m) at (1,1) {};
	\node[vertices, label=below:{$\scriptstyle s+k+u$}] (n) at (5.5,1) {};
	\node[vertices, label=above:{$\scriptstyle n-t+k$}] (o) at (1,4) {};
	\node[vertices, label=above:{$\scriptstyle n-t+k+u$}] (p) at (5.5,4) {};
	\node[vertices, label=above:{$\scriptstyle n-t+k+2u$}] (q) at (10.5,4) {};
	\node[vertices, label=below:{$\scriptstyle s+k+2u$}] (r) at (10.5,1) {};
	\node[vertices, label=above:{$\scriptstyle n-t+k+(p-1)u$}] (s) at (16,4) {};
	\node[vertices, label=above:{$\scriptstyle n-t+k+pu=n-t+k$}] (t) at (24,4) {};
	\node[vertices, label=below:{$\scriptstyle s+k+(p-1)u$}] (u) at (16,1) {};
	\node[vertices, label=below:{$\scriptstyle s+k+pu=s+k$}] (v) at (24,1) {};

	\node[vertices, label=left:{$\scriptstyle n-t+k+qv=n-t+k$}] (a) at (10.5,8) {};
	\node[vertices, label=right:{$\scriptstyle n-s+k+qv=n-s+k$}] (b) at (15,8) {};
	\node[vertices, label=left:{$\scriptstyle n-t+k+(q-1)v$}] (c) at (10.5,12.5) {};
	\node[vertices, label=right:{$\scriptstyle n-s+k+(q-1)v$}] (d) at (15,12.5) {};
	\node[vertices, label=left:{$\scriptstyle n-t+k+2v$}] (e) at (10.5,17) {};
	\node[vertices, label=right:{$\scriptstyle n-s+k+2v$}] (f) at (15,17) {};
	\node[vertices, label=left:{$\scriptstyle n-t+k+v =s+k$}] (g) at (10.5,21.5) {};
	\node[vertices, label=right:{$\scriptstyle n-s+k+v = t+k$}] (h) at (15,21.5) {};
	\node[vertices, label=left:{$\scriptstyle n-t+k$}] (i) at (10.5,26) {};
	\node[vertices, label=right:{$\scriptstyle n-s+k$}] (j) at (15,26) {};

	\foreach \to/\from in
	{ o/m, o/p,o/n, m/n, n/p, p/q, p/r, q/r, r/n, s/u, s/t, u/v, t/v, s/v, a/b, a/c,b/d,c/d,e/f, f/h,e/g,g/h, g/i, h/j, i/j, b/c,f/g,h/i} \draw [-] (\to)--(\from);
	\foreach \to/\from in
	{ r/u, q/s, e/c,f/d} \draw [dashed] (\to)--(\from);
	
	font=\scriptsize

	\path (2,1) -- (2,4) node[pos=0.3, text=red]  {$\scriptstyle \tau_k^1$};
	\path (4,1) -- (4,4) node[pos=0.7, text=blue] {$\scriptstyle \tau_k^2$};
	
	\path (7,1) -- (7,4) node[pos=0.3, text=red]  {$\scriptstyle \tau_{k+u}^1$};
	\path (9,1) -- (9,4) node[pos=0.7, text=blue] {$\scriptstyle \tau_{k+u}^2$};
	
	\path (18.3,1) -- (18.3,4) node[pos=0.3, text=red]  {$\scriptstyle \tau_{k+(p-1)u}^1$};
	\path (22,1) -- (22,4) node[pos=0.7, text=blue] {$\scriptstyle \tau_{k+(p-1)u}^2$};

	\path (11.5,21.5) -- (11.5,26) node[pos=0.3, text=red]  {$\scriptstyle \tau_k^1$};
	\path (13.5,21.5) -- (13.5,26) node[pos=0.7, text=blue] {$\scriptstyle \tau_k^2$};

	\path (12,17) -- (12,21.5) node[pos=0.3, text=red]  {$\scriptstyle \tau_{k+v}^1$};
	\path (13.5,17) -- (13.5,21.5) node[pos=0.7, text=blue] {$\scriptstyle \tau_{k+v}^2$};
	
	\path (12.4,8) -- (12.4,12.5) node[pos=0.15, text=red]  {$\scriptstyle \tau_{k+(q-1)v}^1$};
	\path (13.2,8) -- (13.2,12.5) node[pos=0.85, text=blue] {$\scriptstyle \tau_{k+(q-1)v}^2$};
	
	\end{tikzpicture}
	\caption{}\label{figuretorus1}
\end{figure}

\begin{proof}[Proof of Theorem \ref{special}]
	Using Lemma \ref{collapse1}, $\N(C_n(s,t))$ collapses to a subcomplex
	$X$ with $M(X) = \{\tau_k^i \ | \ i \in \{1,2\}, k \in  [n]\}$, where $\tau_k^1 = \{s+k, t+k, n-t+k\}$ and $\tau_k^2 = \{n-s+k, t+k, n-t+k\}$.
	Let $u = t-s$ and $v = t+s$. Let $k \in [n]$.
	Since, $s+u+k = t+k$ and $n-t+u+k =n-s+k$, we see that 
	$\tau_k^2$ and $\tau_{k+u}^1$
	have a common 1-simplex $\{n-s+k, t+k\}$. Further, $n-s+k+v=  t+k$ and $n-t+k+v = s+k$ implies that  $\tau_k^{1}$ and $ \tau_{k+v}^2$ have a
	common $1$-simplex $\{s+k, t+k,\}$. Since $\tau_k^1$ and $\tau_k^2$ have a common 1-simplex $\{t+k, n-t+k\}$, we conclude that $\tau_k^1 \tau_k^2$, 
	$ \tau_{k}^2 \tau_{k+u}^1$ and  $\tau_k^1 \tau_{k+v}^2$ are  $2$-paths 
	in $X$.

	\begin{itemize}
		\item[(i)] $s = \frac{p-q}{2}$ and $t = \frac{p+q}{2}$.
		
		In this case $q=t-s = u$ and $p=t+s=v$.
		Now, $\omega = \tau_{k}^1 \tau_k^2 \tau_{k+u}^1 \tau_{k+u}^2 \ldots $ $ \tau_{k+(p-1)u}^1 \tau_{k+(p-1)u}^2$ and
		$\Gamma = \tau_{k}^2 \tau_{k}^1 \tau_{k+v}^2 \tau_{k+v}^1 \ldots  \tau_{k+(q-1)v}^2 \tau_{k+(q-1)v}^1$ are 
		$2$-paths in $X$.  Since  $k+pu \ \text{and} \ k+qv \equiv k \ (\text{mod \ n})$,
		$\omega$ (see the horizontal rectangular strip of Figure \ref{figuretorus1})  and $\Gamma$
		(see the vertical rectangular strip of Figure \ref{figuretorus1}) are  closed paths in $X$. 
		Thus  the simplices of $X$ can be arranged in a rectangular grid of order $(q+1) \times (p+1)$ as 
		depicted in Figure \ref{figuretorus2}. Since $X$ has $2n$ $1$-dimensional simplices, to prove that this rectangular grid  gives a triangulation of  a torus, it is enough to show that
		there is no identification among the vertices other than that  shown in Figure \ref{figuretorus2}. Any vertex of
		$C_n(s,t)$ can be written as $n-t+k$ for some $k \in [n]$ and 
		therefore can be made as the left uppermost corner vertex $a_{1,1}$ of this grid.   Hence, it is enough to show that $n-t+k = a_{1,1} \neq a_{i,j}$ unless
		$i \in \{1, q+1\}$
		and $j \in \{1, p+1\}$, {\it i.e.}, $n-t+k$ lies only on the four corners of the grid. But, since $a_{i, j} = (i-1)p+n-t+k+(j-1)q$, $a_{i, j} = n-t+k$ implies that $(i-1)p+(j-1)q \equiv 0 \ (\text {mod \ n})$.
		Further, since $gcd(p,q) = 1$, this is only possible if  $i \in \{1, q+1\}$ and $j \in \{1, p+1\}$.

		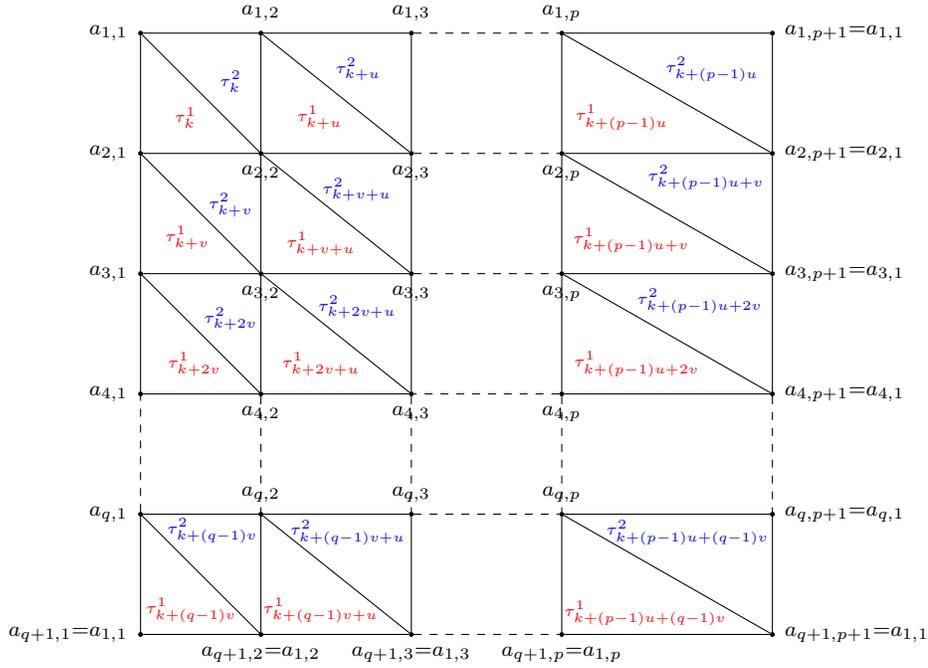
\begin{figure}[H]
			\centering
			\begin{tikzpicture}
			[scale=0.4, vertices/.style={draw, fill=black, circle, inner sep=0.5pt, minimum size = 0pt, }]
			
			\node[vertices, label=left:{$\scriptstyle a_{1,1}$}] (o) at (1,25) {};
			\node[vertices, label=above:{$\scriptstyle a_{1,2}$}] (p) at (5,25) {};
			\node[vertices, label=above:{$\scriptstyle a_{1,3}$}] (q) at (10,25) {};
			\node[vertices, label=above:{$\scriptstyle a_{1,p}$}] (s) at (15,25) {};
			\node[vertices, label=right:{$\scriptstyle a_{1,p+1} = a_{1,1}$}] (t) at (22,25) {};
			
			\node[vertices, label=below:{$\scriptstyle a_{2,p}$}] (u) at (15,21) {};
			\node[vertices, label=right:{$\scriptstyle a_{2,p+1} = a_{2,1}$}] (v) at (22,21) {};
			\node[vertices, label=left:{$\scriptstyle a_{2,1}$}] (m) at (1,21) {};
			\node[vertices, label=below:{$\scriptstyle a_{2,2}$}] (n) at (5,21) {};
			\node[vertices, label=below :{$\scriptstyle a_{2,3}$}] (r) at (10,21) {};
			
			\node[vertices, label=below:{$\scriptstyle a_{3,3}$}] (bbb) at (10,17) {};
			\node[vertices, label=below:{$\scriptstyle a_{3,p}$}] (w) at (15,17) {};
			\node[vertices, label=right:{$\scriptstyle a_{3,p+1} = a_{3,1}$}] (x) at (22,17) {};
			\node[vertices, label=left:{$\scriptstyle a_{3,1}$}] (k) at (1,17) {};
			\node[vertices, label=below:{$\scriptstyle a_{3,2}$}] (l) at (5,17) {};
			
			\node[vertices, label=below:{$\scriptstyle a_{4,p}$}] (y) at (15,13) {};
			\node[vertices, label=right:{$\scriptstyle a_{4,p+1} = a_{4,1}$}] (z) at (22,13) {};
			\node[vertices, label=left:{$\scriptstyle a_{4,1}$}] (i) at (1,13) {};
			\node[vertices, label=below:{$\scriptstyle a_{4,2}$}] (j) at (5,13) {};
			\node[vertices, label=below:{$\scriptstyle a_{4,3}$}] (aaa) at (10,13) {};

			\node[vertices, label=above:{$\scriptstyle a_{q,3}$}] (ccc) at (10,9) {};
			\node[vertices,label=left:{$\scriptstyle a_{q,1}$}] (e) at (1,9) {}; 
			\node[vertices, label=above:{$\scriptstyle a_{q,2}$}] (f) at (5,9) {};
			\node[vertices, label=above:{$\scriptstyle a_{q, p}$}] (g) at (15,9) {};
			\node[vertices, label=right:{$\scriptstyle a_{q,p+1} = a_{q,1}$}] (h) at (22,9) {};
			
			\node[vertices, label=below:{$\scriptstyle a_{q+1,3} = a_{1,3}$}] (ddd) at (10,5) {};
			\node[vertices, label=left: {$\scriptstyle a_{q+1,1} = a_{1,1}$}] (a) at (1,5) {};
			\node[vertices, label=below:{$\scriptstyle a_{q+1, 2} = a_{1,2}$}] (b) at (5,5) {};
			\node[vertices,label=below:{$\scriptstyle a_{q+1, p} = a_{1,p}$}] (c) at (15,5) {};
			\node[vertices,label=right:{$ \scriptstyle a_{q+1, p+1} = a_{1,1}$}] (d) at (22,5) {};

			\foreach \to/\from in
			{a/b,a/e,e/f,b/f,c/d,c/g,g/h,h/d,k/i, k/l, l/j, i/j, k/j, o/m, o/p,o/n, m/n, n/p, m/k, m/l, l/n, p/q, p/r, q/r, r/n, s/u, s/t, u/v, t/v, s/v, u/w,v/x, w/x, u/x, w/y, x/z, y/z, z/w, e/b, g/d, aaa/bbb, l/bbb, l/aaa, j/aaa, n/bbb, r/bbb, ccc/ddd, f/ddd, f/ccc,
				b/ddd} \draw [-] (\to)--(\from);
			\foreach \to/\from in
			{ddd/c,ccc/g, e/i,j/f, q/s, r/u, bbb/w, y/g, z/h, y/aaa, aaa/ccc} \draw [dashed] (\to)--(\from);
			
			\coordinate (a) at (1,21);
			\coordinate (aa) at (6,25);
			
			\coordinate (b) at (6,21);
			\coordinate (bb) at (9.2,25);
			
			\coordinate (c) at (14,21);
			\coordinate (cc) at (22,25);
			
			\coordinate (d) at (1,17);
			\coordinate (dd) at (6.2,21);

			\coordinate (e) at (14,17);
			\coordinate (ee) at (22,21);

			\coordinate (f) at (1,1);
			\coordinate (ff) at (6.5,4.5);

			font=\scriptsize

			\path (a) -- (aa) node[pos=0.3, text=red]  {$\scriptstyle \tau_k^1$};
			\path (a) -- (aa) node[pos=0.6, text=blue] {$\scriptstyle \tau_k^2$};
			
			\path (b) -- (bb) node[pos=0.3, text=red]  {$\scriptstyle \tau_{k+u}^1$};
			\path (b) -- (bb) node[pos=0.7, text=blue] {$\scriptstyle \tau_{k+u}^2$};
			
			\path (17,21) -- (17,25) node[pos=0.3, text=red]  {$\scriptstyle \tau_{k+(p-1)u}^1$};
			\path (20,21) -- (20,25) node[pos=0.7, text=blue] {$\scriptstyle \tau_{k+(p-1)u}^2$};
			
			\path (d) -- (dd) node[pos=0.3, text=red]  {$\scriptstyle \tau_{k+v}^1$};
			\path (4,17) -- (4,22) node[pos=0.46, text=blue] {$\scriptstyle \tau_{k+v}^2$};
			
			\path (15.5,17) -- (22,21) node[pos=0.28, text=red]  {$\scriptstyle \tau_{k+(p-1)u+v}^1$};
			\path (19.8,17) -- (19.8,21) node[pos=0.8, text=blue] {$\scriptstyle \tau_{k+(p-1)u+v}^2$};

			\path (2.7, 5) -- (2.7,9) node[pos=0.2, text=red]  {$\scriptstyle \tau_{k+(q-1)v}^1$};
			\path (3.4, 5) -- (3.4,10) node[pos=0.69, text=blue] {$\scriptstyle \tau_{k+(q-1)v}^2$};
			
			\path (2.8, 13) -- (2.8, 18) node[pos=0.2, text=red] {$\scriptstyle \tau_{k+2v}^1$};
			\path (4, 13) -- (4,18) node[pos=0.5, text=blue] {$\scriptstyle \tau_{k+2v}^2$};
			
			\path (17.5, 13) -- (17.5,18) node[pos=0.2, text=red] {$\scriptstyle \tau_{k+(p-1)u+2v}^1$};
			\path (19.6, 13) -- (19.6,18) node[pos=0.62, text=blue] {$\scriptstyle \tau_{k+(p-1)u+2v}^2$};
			
			\path (17.8,5 ) -- (17.8,9) node[pos=0.18, text=red] {$\scriptstyle \tau_{k+(p-1)u+(q-1)v}^1$};
			
			\path (19.2,5 ) -- (19.2,9) node[pos=0.85, text=blue] {$\scriptstyle \tau_{k+(p-1)u+(q-1)v}^2$};

			\path (7, 13 ) -- (7,18) node[pos=0.2, text=red] {$\scriptstyle \tau_{k+2v+u}^1$};
			
			\path (8.2, 13 ) -- (8.2,18) node[pos=0.58, text=blue] {$\scriptstyle \tau_{k+2v+u}^2$};
			
			\path (7, 17 ) -- (7,21) node[pos=0.25, text=red] {$\scriptstyle \tau_{k+v+u}^1$};
			
			\path (8.2, 17 ) -- (8.2,21) node[pos=0.7, text=blue] {$\scriptstyle \tau_{k+v+u}^2$};
			
			\path (7, 5 ) -- (7,9) node[pos=0.2, text=red] {$\scriptstyle \tau_{k+(q-1)v+u}^1$};
			\path (7.9, 5 ) -- (7.9,9) node[pos=0.85, text=blue] {$\scriptstyle \tau_{k+(q-1)v+u}^2$};
			
			\end{tikzpicture}
			\caption{$\scriptstyle u = t-s, v = t+s, a_{i,j} = (i-1)v+ n-t+k+(j-1)u, 1 \leq i \leq q+1, 1 \leq j \leq p+1$.} \label{figuretorus2}
		\end{figure}

		\item[(ii)]  $s = \frac{p^2-q}{2} $ and $t = \frac{p^2+q}{2}$.
		
		In this case, $v = s+t = p^2$ and $u = t-s= q$. Since $pq= n$, we have the closed
		$2$-paths $\tau_{k}^1 \tau_k^2 \tau_{k+u}^1 \tau_{k+u}^2 \ldots \tau_{k+(p-1)u}^1 \tau_{k+(p-1)u}^2$ and 
		$\tau_{k}^2 \tau_{k}^1 \tau_{k+v}^2 \tau_{k+v}^1 \ldots  \tau_{k+(q-1)v}^1 \tau_{k+(q-1)v}^1$.
		
		In this case also, we can arrange the simplices of $X$ in a rectangular grid of
		order $(q+1) \times (p+1)$, as depicted in Figure \ref{figuretorus2}. By an argument similar as of the case $(i)$,
		to prove that this rectangular grid  gives a triangulation of a torus, it is enough to show that
		$n-t+k = a_{1,1} \neq a_{i,j}$ unless $i \in \{1, q+1\}$ and $j \in \{1, p+1\}$.  
		But, since $a_{i, j} = (i-1)v+n-t+k+(j-1)u$, $a_{i, j} = n-t+k$ implies that $(i-1)v+(j-1)u \equiv 0 \ (\text {mod \ n})$.
		Since $v = p^2, u = q, (i-1)p^2+(j-1)q \equiv 0 \ (\text {mod \ n})$. This is only possible when
		$i \in \{1, q+1\}$ and $j \in \{1, p+1\}$ as $gcd(p,q)=1.$

	\end{itemize}
	
\end{proof}

\subsection*{Acknowledgements}
The author would like to thank anonymous  referees for their helpful  suggestions which led to a significant improvement in the presentation of this article.

\end{document}